\numberwithin{equation}{section}
\newtheorem{theorem}{Theorem}[section]
\newtheorem{lemma}[theorem]{Lemma}
\theoremstyle{definition}
\newtheorem{definition}[theorem]{Definition}
\theoremstyle{remark}
\numberwithin{equation}{section}
\DeclareMathOperator{\RE}{Re}
\begin{document}
	\title[Second and Third order differential subordination for exponential function]{Second and Third order differential subordination for exponential function}

	\author[S. Sivaprasad Kumar]{S. Sivaprasad Kumar}
	\address{Department of Applied Mathematics, Delhi Technological University, Delhi--110042, India}
	\email{spkumar@dce.ac.in}

	\author[Neha Verma]{Neha Verma}
	\address{Department of Applied Mathematics, Delhi Technological University, Delhi--110042, India}
	\email{nehaverma1480@gmail.com}

	\subjclass[2010]{30C45, 30C80}
	
	\keywords{Differential Subordination, Exponential Function, Starlike Function, Third-order Subordination}
	\maketitle
\begin{abstract}
This article presents several findings regarding second and third-order differential subordination of the form:
$$
    p(z)+\gamma_1 zp'(z)+\gamma_2 z^2p''(z)\prec h(z)\implies p(z)\prec e^z
$$
and $$
    p(z)+\gamma_1 zp'(z)+\gamma_2 z^2p''(z)+\gamma_3 z^3p'''(z)\prec h(z)\implies p(z)\prec e^z.
$$
Here, $\gamma_1$, $\gamma_2$, and $\gamma_3$ represent positive real numbers, and various selections of $h(z)$ are explored within the context of the class $\mathcal{S}^{*}_{e} := \{f \in \mathcal{A} : zf'(z)/f(z) \prec e^z\}$, which denotes the class of starlike functions associated with the exponential function.

\end{abstract}
\maketitle

\section{Introduction}
	\label{intro}
\noindent 
Let $\mathcal{A}$ represent the class of normalized analytic functions defined on the open unit disk $\mathbb{D}:=\{z\in \mathbb{C}:|z|<1\}$ of the form
$$\mathcal{A}=\{f\in \mathcal{H}:f(z)=z+a_2z^2+a_3z^3+\cdots\}$$
where $\mathcal{H}=\mathcal{H}(\mathbb{D})$ is the class of analytic functions defined on $\mathbb{D}$. For any positive integer $n$ and $a\in \mathbb{C}$, we define
\begin{equation*}
    \mathcal{H}[a,n]:=\{f\in \mathcal{H}:f(z)=a+a_nz^n+a_{n+1}z^{n+1}+a_{n+2}z^{n+2}+\cdots\}.
\end{equation*}
Suppose that $\mathcal{S}\subset\mathcal{A}$ is the collection of all univalent functions. A function $f\in \mathcal{S}$ is called starlike if and only if $\RE(zf'(z)/f(z))>0$ and the class consisting of all such functions is denoted by $\mathcal{S}^{*}$. If $h$ and $g$ belong to the class $\mathcal{A}$, we say $g$ is subordinate to $h$ (i.e., $g\prec h$) provided there exists a Schwarz function $w$ with $w(0)=0$ and $|w(z)|\leq |z|$ such that $g(z)=h(w(z))$. Additionally, if $h$ is univalent, then $g\prec h$ if and only if $h(0)=g(0)$ and $g(\mathbb{D})\subseteq h(\mathbb{D})$. A significant milestone in the advancement of differential subordination occurred with the publication of Miller and Mocanu's monograph \cite{miller}. This work initiated a substantial revolution in the research community of geometric function theory. In 1992, Ma and Minda \cite{ma-minda} introduced the class $\mathcal{S}^{*}(\varphi)$ through subordination unifying several subclasses of $\mathcal{S}^{*}$, given by
\begin{equation}
		\mathcal{S}^{*}(\varphi)=\bigg\{f\in \mathcal {A}:\dfrac{zf'(z)}{f(z)}\prec \varphi(z) \bigg\}\label{mindaclass}
\end{equation}
with $\varphi$ as an analytic univalent function so that $\RE\varphi(z)>0$, $\varphi(\mathbb{D})$ is symmetric about the real axis and starlike with respect to $\varphi(0)=1$ with $\varphi'(0)>0$. These functions are termed as Ma-Minda functions. By specializing the function $\varphi$ in \eqref{mindaclass}, researchers have derived various noteworthy subclasses of $\mathcal{S}^*$. Raina and Sok'{o}\l \cite{raina} introduced the class $\mathcal{S}^{*}_{q}$ by opting for $\varphi(z)=z+\sqrt{1+z^2}$, while Cho et al. \cite{chosine} studied the class of starlike functions $\mathcal{S}^{*}_{S}$, wherein $\varphi(z)=1+\sin z$. Kumar and Kamaljeet \cite{kumar-ganganiaCardioid-2021} and Arora and Kumar \cite{kush} explored the classes $\mathcal{S}^{*}_{\varrho}$ and $\mathcal{S}^{*}_{\rho}$, respectively, by selecting $\varphi(z)$ as $1+ze^z$ and $1+\sinh^{-1} z$.

Likewise, Mendiratta et al. \cite{mendi} introduced and examined the class $\mathcal{S}^{*}_{e}:=\mathcal{S}^{*}(e^z)$ by choosing $\varphi(z)=e^z$ in \eqref{mindaclass}. Geometrically, a function $f$ belongs to $\mathcal{S}^{*}_{e}$ if and only if $zf'(z)/f(z)\in \Delta_e:=\{\varpi\in \mathbb{C}:|\log \varpi|<1\}$. In their study, they derived conditions on the parameter $\beta$ such that $p(z)\prec e^z$ holds, given that $1+\beta zp'(z)/p(z)\prec h(z)$ for $h(z)=(1+Cz)/(1+Dz)$, $\sqrt{1+z}$ and $e^z$. Furthermore, Kumar and Ravichandran \cite{sushil} extended their findings and established sharp bounds on $\beta$ to ensure $p(z)\prec e^z$ in cases where $1+\beta z p'(z)/p^j(z)\prec h(z)$ for $j=0,2$ and $h(z)=(1+Cz)/(1+Dz)$ or $\sqrt{1+z}$. Subsequently, Naz et al. \cite{adibastarlikenessexponential} further extended these results by deriving a broad range of first-order subordination implications for the exponential function and generalizing the results of \cite{mendi}. To explore the latest developments in the theory of differential subordination and coefficient related problems for the exponential function, one can refer to \cite{goelhigher,ckms,mushtaq,nehaexpo,madaan,goel}.
Recently, Verma and Kumar \cite{nehadiffexpo} derived conditions on the parameters $\alpha$, $\beta$ and $\gamma$ while considering the expressions $1+\gamma zp'(z)+\beta z^2 p''(z)\prec h(z)$ and $1+\gamma zp'(z)+\beta z^2 p''(z)+\alpha z^3 p'''(z)\prec h(z)$ so that $p(z)\prec e^z$ for several choices of the function $h(z)$ such as $\sqrt{1+z}$, $2/(1+e^{-z})$ and $e^z$. Motivated by the aforementioned work, this paper concentrates on establishing implications of second and third-order differential subordination involving the exponential function. In the following, we present essential definitions and notations which are necessary to proceed further.
\begin{definition}
Let $Q$ denote the set of analytic and univalent functions $q\in\overline{\mathbb{D}}\setminus \mathbb{E}(q)$, where
\begin{equation*}
   \mathbb{E}(q)=\{\zeta\in \partial \mathbb{D}:\lim_{z\rightarrow\zeta}q(z)=\infty\}
\end{equation*}
such that $q'(\zeta)\neq 0$ for $\zeta\in \partial \mathbb{D}\setminus \mathbb{E}(q)$. The subclass of $Q$ for which $q(0)=a$ is denoted by $Q(a)$.
\end{definition}

\begin{lemma}\cite{antoninoandmiller}\label{lemmaformk}
Let $z_0\in \mathbb{D}$ and $r_0=|z_0|$. Let $f(z)=a_nz^n+a_{n+1}z^{n+1}+\cdots$ be continuous on $\overline{\mathbb{D}}_{r_0}$ and analytic on $\mathbb{D}\cup\{z_0\}$ with $f(z)\neq 0$ and $n\geq 2$. If $|f(z_0)|=\max \{|f(z)|:z\in \overline{\mathbb{D}}_{r_0}\}$ and $|f'(z_0)|=\max\{|f'(z)|:z\in \overline{\mathbb{D}}_{r_0}\}$, then there exist real constants $m$, $k$ and $l$ such that
\begin{equation*}
      \frac{z_0f'(z_0)}{f(z_0)}=m,\quad 1+\frac{z_0f''(z_0)}{f'(z_0)}=k\quad \text{and}\quad 2+\RE\bigg(\frac{z_0f'''(z_0)}{f''(z_0)}\bigg)=l
\end{equation*}
where $l\geq k\geq m\geq n\geq2$.

\end{lemma}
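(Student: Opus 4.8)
The plan is to exploit the boundary extremum structure furnished by the hypotheses. Since $f$ is analytic and nonconstant, by the maximum modulus principle both maxima in the statement are attained on the circle $|z|=r_0$; writing $z=r_0e^{i\theta}$ and $z_0=r_0e^{i\theta_0}$, I would study the two real-valued functions
$$F(\theta)=\log|f(r_0e^{i\theta})|\quad\text{and}\quad G(\theta)=\log|f'(r_0e^{i\theta})|,$$
each of which has a local maximum at $\theta=\theta_0$. The elementary identity $\tfrac{d}{d\theta}\log f(r_0e^{i\theta})=i\,zf'(z)/f(z)$ (with $z=r_0e^{i\theta}$, using $dz/d\theta=iz$) gives $F'(\theta)=-\IM\!\big(zf'/f\big)$ and, likewise, $G'(\theta)=-\IM\!\big(zf''/f'\big)$.

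First I would extract the reality statements from the first-order conditions $F'(\theta_0)=G'(\theta_0)=0$: these force $\IM(z_0f'(z_0)/f(z_0))=0$ and $\IM(z_0f''(z_0)/f'(z_0))=0$, so $m:=z_0f'(z_0)/f(z_0)$ and $k:=1+z_0f''(z_0)/f'(z_0)$ are real. Along the way one checks that the denominators do not vanish: $f(z_0)\neq 0$ by hypothesis, and once $m\geq n>0$ and $k-1\geq n-1>0$ are known, $f'(z_0)$ and $f''(z_0)$ are nonzero as well. The lower bound $m\geq n$ is Jack's lemma applied to $f$: set $\Phi(\zeta)=f(r_0\zeta)/f(z_0)$, which is analytic on $\overline{\mathbb{D}}$, satisfies $|\Phi|\leq 1$ there, has modulus $1$ at the boundary point $\zeta_0$ corresponding to $z_0$, and has a zero of order $n$ at the origin. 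Writing $\psi=\Phi/\zeta^{\,n}$, a boundary Schwarz-lemma argument shows $\zeta_0\psi'(\zeta_0)/\psi(\zeta_0)$ is real and nonnegative, and since $z_0f'(z_0)/f(z_0)=n+\zeta_0\psi'(\zeta_0)/\psi(\zeta_0)$ we conclude $m\geq n$.

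The two remaining inequalities come from the second-order conditions $F''(\theta_0)\leq 0$ and $G''(\theta_0)\leq 0$. The key input is the logarithmic-derivative identity: for $p=zf'/f$ one has $zp'/p=(1+zf''/f')-p$, and for $q=zf''/f'$ one has $zq'/q=(1+zf'''/f'')-q$. Differentiating once more in $\theta$ gives $F''(\theta)=-\RE(zp')$ and $G''(\theta)=-\RE(zq')$. Evaluating at $z_0$ and using that $m$ and $k$ are already real, these collapse to
$$F''(\theta_0)=-m(k-m),\qquad G''(\theta_0)=-(k-1)\big(l-k\big),$$
where $l=2+\RE(z_0f'''(z_0)/f''(z_0))$; only the real part of $z_0f'''/f''$ survives because the factor $k-1$ is real, which is precisely why the statement records $l$ with an $\RE$. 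Since $F''(\theta_0)\leq 0$ with $m>0$ we obtain $k\geq m$, and since $G''(\theta_0)\leq 0$ with $k-1>0$ we obtain $l\geq k$. Chaining these with $m\geq n\geq 2$ yields $l\geq k\geq m\geq n\geq 2$.

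I expect the main obstacle to be the bookkeeping in the second-order step: correctly differentiating twice in $\theta$, verifying that the complex cross-terms reduce to the clean real products $m(k-m)$ and $(k-1)(l-k)$ only after invoking the already-established reality of $m$ and $k$, and applying the second-derivative test with the right sign (a maximum forces $F'',G''\leq 0$). The Jack's-lemma step for $m\geq n$ is standard but must be set up so that the order-$n$ vanishing of $f$ at the origin is genuinely used; everything else is routine once the two logarithmic-derivative identities are in hand.
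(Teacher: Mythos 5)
The paper never proves Lemma \ref{lemmaformk}: it is imported verbatim from Antonino and Miller \cite{antoninoandmiller}, so there is no internal proof to compare yours against; I can only assess your argument and set it beside the published one. Your proof is correct, and the logical ordering is right where it needs to be: the first-order conditions $F'(\theta_0)=G'(\theta_0)=0$ give the reality of $m$ and $k$ (the hypothesis on $|f'|$ enters only here, and it is exactly what upgrades the classical conclusion $\RE\big(1+z_0f''(z_0)/f'(z_0)\big)\geq m$ to the equality with a real constant $k$); the boundary Schwarz/Jack argument gives $m\geq n$; the second-derivative tests give $k\geq m$ and $l\geq k$, and your identities $F''(\theta_0)=-m(k-m)$ and $G''(\theta_0)=-(k-1)(l-k)$ check out, with the nonvanishing of $f(z_0)$, $f'(z_0)$, $f''(z_0)$ established in the correct order before $m$, $k$, $l$ are formed (note $f'(z_0)\neq0$ can be had even more cheaply: if $f'(z_0)=0$ then the maximality of $|f'(z_0)|$ forces $f'\equiv0$, hence $f\equiv0$). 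The difference from the published route is one of packaging rather than substance: Antonino and Miller simply apply the classical second-order Jack--Miller--Mocanu lemma twice, once to $f$ (yielding $m\geq n$ real and $\RE\big(1+z_0f''(z_0)/f'(z_0)\big)\geq m$) and once to $g=f'$, which has a zero of order $n-1\geq1$ at the origin and attains its maximum modulus at $z_0$ by hypothesis (yielding $z_0f''(z_0)/f'(z_0)$ real, hence $k$ real with $k\geq m$, and $\RE\big(1+z_0f'''(z_0)/f''(z_0)\big)\geq k-1$, i.e.\ $l\geq k$). Your $F$-computations are precisely the proof of that classical lemma for $f$, and your $G$-computations are the same proof run on $f'$; so the two arguments are mathematically identical, but the citation-based version is shorter and makes the iterative structure transparent, while yours buys self-containedness at the cost of re-deriving a standard lemma.
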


\noindent In 2020, Kumar and Goel \cite{goelhigher} modified the results established by Antonino and Miller \cite{antoninoandmiller}, ensuring that the Ma-Minda functions satisfy the conditions for third-order differential subordination. The modified results are mentioned below through a definition and lemma, serving as prerequisites for our main outcomes concerning the class $\mathcal{S}^*_e$.

\begin{definition}\cite{goelhigher}
Let $\Omega$ be a set in $\mathbb{C}$, $q\in Q$ and $k\geq m\geq n\geq 2$. The class of admissible operators $\xi_n[\Omega,q]$ consists of those $\xi:\mathbb{C}^4\times \mathbb{D}\rightarrow \mathbb{C}$ that satisfy the admissibility condition
\begin{equation*}
 \xi(r,s,t,u;z)\notin \Omega\quad \text{whenever}\quad z\in \mathbb{D},\quad r=q(\zeta),\quad s=m\zeta q'(\zeta),      
\end{equation*}
\begin{equation*}
 \RE\bigg(1+\frac{t}{s}\bigg)\geq m\bigg(1+\RE \frac{\zeta q''(\zeta)}{q'(\zeta)}\bigg)
\end{equation*} 
and
\begin{equation*}
\RE \frac{u}{s}\geq m^2\RE \frac{\zeta^2 q'''(\zeta)}{q'(\zeta})+3m(k-1)\RE \frac{\zeta q''(\zeta)}{q'(\zeta)} 
\end{equation*}
for $\zeta\in \partial \mathbb{D}\setminus \mathbb{E}(q)$.
\end{definition}

\begin{lemma}\cite{goelhigher}\label{8 firsttheoremthirdorder}
  Let $p\in \mathcal{H}[a,n]$ with $m\geq n\geq2$, and let $q\in Q(a)$ such that it satisfies
  \begin{equation}
       \bigg|\frac{zp'(z)}{q'(\zeta)}\bigg|\leq m,
  \end{equation}
  for $z\in \mathbb{D}$ and $\zeta\in \partial \mathbb{D}\setminus \mathbb{E}(q)$. If $\Omega$ is a set in $\mathbb{C}$, $\xi\in \Psi_n[\Omega,a]$ and
  \begin{equation*}
      \xi(p(z),zp'(z),z^2p''(z),z^3p'''(z);z)\subset \Omega,
  \end{equation*}
  then $p\prec q$.
\end{lemma}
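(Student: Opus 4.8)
The plan is to argue by contradiction along the lines of the Miller--Mocanu method, using Lemma \ref{lemmaformk} to supply the third-order information. Suppose $p\not\prec q$. Since $p(0)=q(0)=a$ and $q$ is univalent, there is a largest radius $r_0\in(0,1)$ for which $p(\mathbb{D}_{r_0})\subseteq q(\mathbb{D})$, and at this radius the boundaries touch: there exist $z_0$ with $|z_0|=r_0$ and $\zeta_0\in\partial\mathbb{D}\setminus\mathbb{E}(q)$ such that $p(z_0)=q(\zeta_0)$. When $q$ does not extend analytically to $\overline{\mathbb{D}}$ one first runs the argument for the dilations $q_\rho(z)=q(\rho z)$, $\rho<1$, and then lets $\rho\to 1^{-}$; the hypothesis $|zp'(z)/q'(\zeta)|\le m$ is what keeps the contact relations below uniform in this limit and, more importantly, what lets one verify the maximality hypothesis on the derivative in Lemma \ref{lemmaformk}.

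The crux is to feed an auxiliary function into Lemma \ref{lemmaformk}. Because $q'(\zeta_0)\neq 0$, the map $w=q^{-1}\circ p$ is analytic on a neighbourhood of $\overline{\mathbb{D}}_{r_0}$, with $w(0)=0$, $w=c_nz^n+\cdots$, $|w(z)|<1$ for $|z|<r_0$, and $w(z_0)=\zeta_0$, $|\zeta_0|=1$; thus $|w|$ attains its maximum over $\overline{\mathbb{D}}_{r_0}$ at $z_0$. Applying Lemma \ref{lemmaformk} to $f=w$ produces real constants with $l\ge k\ge m\ge n\ge 2$ and
\begin{equation*}
\frac{z_0w'(z_0)}{w(z_0)}=m,\qquad 1+\frac{z_0w''(z_0)}{w'(z_0)}=k,\qquad 2+\RE\frac{z_0w'''(z_0)}{w''(z_0)}=l.
\end{equation*}
Setting $r=p(z_0)=q(\zeta_0)$, $s=z_0p'(z_0)$, $t=z_0^2p''(z_0)$ and $u=z_0^3p'''(z_0)$ and differentiating $p=q\circ w$, the chain rule together with $z_0w'(z_0)=m\zeta_0$ and $z_0^2w''(z_0)=m(k-1)\zeta_0$ yields $s=m\zeta_0q'(\zeta_0)$ and
\begin{equation*}
1+\frac{t}{s}=m\,\frac{\zeta_0q''(\zeta_0)}{q'(\zeta_0)}+k,\qquad \frac{u}{s}=m^2\frac{\zeta_0^2q'''(\zeta_0)}{q'(\zeta_0)}+3m(k-1)\frac{\zeta_0q''(\zeta_0)}{q'(\zeta_0)}+(k-1)\frac{z_0w'''(z_0)}{w''(z_0)}.
\end{equation*}

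Taking real parts now reduces everything to the chain $l\ge k\ge m\ge 2$. From $k\ge m$ the first identity gives $\RE(1+t/s)=k+m\RE(\zeta_0q''(\zeta_0)/q'(\zeta_0))\ge m(1+\RE(\zeta_0q''(\zeta_0)/q'(\zeta_0)))$, which is the second admissibility inequality. For the third, since $\RE(z_0w'''(z_0)/w''(z_0))=l-2\ge 0$ and $k-1\ge 1>0$, the extra term is the nonnegative quantity $(k-1)(l-2)$, so $\RE(u/s)\ge m^2\RE(\zeta_0^2q'''(\zeta_0)/q'(\zeta_0))+3m(k-1)\RE(\zeta_0q''(\zeta_0)/q'(\zeta_0))$. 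Hence $(r,s,t,u)$ satisfies all three conditions defining the admissible class, and $\xi\in\Psi_n[\Omega,a]$ forces $\xi(r,s,t,u;z_0)\notin\Omega$. This contradicts the hypothesis $\xi(p(z),zp'(z),z^2p''(z),z^3p'''(z);z)\in\Omega$ evaluated at $z_0$, and therefore $p\prec q$.

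I expect the genuine difficulty to lie not in the algebra above but in the analytic justification of the contact point together with the full hypotheses of Lemma \ref{lemmaformk} --- in particular the maximality of $|w'|$ at $z_0$ and the bookkeeping of the $q_\rho$ approximation when $q$ fails to be analytic on $\partial\mathbb{D}$; this is precisely where the bound $|zp'(z)/q'(\zeta)|\le m$ enters. Once the constants $m,k,l$ and the chain $l\ge k\ge m\ge n$ are secured, matching the admissibility conditions is only a matter of isolating the nonnegative surplus terms $k-m$ and $(k-1)(l-2)$ produced by the chain rule.
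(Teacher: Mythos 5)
First, a point of reference: the paper itself contains no proof of this statement --- it is imported verbatim from Kumar--Goel \cite{goelhigher} (which in turn modifies Antonino--Miller \cite{antoninoandmiller}), so the comparison must be with the argument in that reference. Your reconstruction follows exactly that argument's skeleton: contradiction, contact point $p(z_0)=q(\zeta_0)$ with $p(\mathbb{D}_{r_0})\subset q(\mathbb{D})$, the auxiliary function $w=q^{-1}\circ p$, Lemma \ref{lemmaformk} applied to $w$, then the chain rule and real parts. Your algebra is correct and is indeed the heart of why the Kumar--Goel admissibility condition (with its $3m(k-1)$ term) comes out: $s=m\zeta_0q'(\zeta_0)$, $1+t/s=k+m\,\zeta_0q''(\zeta_0)/q'(\zeta_0)$, and $u/s$ decomposes with the surplus term $(k-1)\,z_0w'''(z_0)/w''(z_0)$, whose real part $(k-1)(l-2)$ is nonnegative by the chain $l\ge k\ge m\ge n\ge 2$.

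There is, however, a genuine gap, which you name in your last paragraph but never close. Lemma \ref{lemmaformk} has \emph{two} maximality hypotheses: $|f(z_0)|=\max_{\overline{\mathbb{D}}_{r_0}}|f|$, which you verify for $w$, and $|f'(z_0)|=\max_{\overline{\mathbb{D}}_{r_0}}|f'|$, which is not automatic (it does not follow from the contact-point construction) and which you never establish. Without it you are not entitled to the constants $k$ and $l$, hence to neither of the two admissibility inequalities you derive. Consequently the hypothesis $|zp'(z)/q'(\zeta)|\le m$ --- the one ingredient that distinguishes this lemma from the second-order Lemma \ref{8 theorem1}, and the whole reason it appears in the statement --- is never actually used in your argument; it is only promised. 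Its intended role is precisely here: since $w'(z)=p'(z)/q'(w(z))$, the bound (combined with control of $|q'|$ at interior points by its boundary values, using that $q'$ is zero-free) yields $|zw'(z)|\le m$ on $\overline{\mathbb{D}}_{r_0}$, which is what makes the derivative condition at $z_0$ available and simultaneously pins down the constant produced at the contact point. This points to a second, related, slip: you use the single letter $m$ for three different objects --- the bound in the hypothesis, the constant output by Lemma \ref{lemmaformk}, and the parameter of the admissibility class $\Psi_n[\Omega,a]$. The contact point a priori produces some $m_0\ge n$ with $|z_0w'(z_0)|=m_0$, and one must check $m_0\le m$ (this is exactly what the hypothesis gives at $(z_0,\zeta_0)$) and that the resulting quadruple $(r,s,t,u)$ falls among the configurations excluded by the admissibility of $\xi$; your write-up silently identifies all these constants, hiding the step. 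In short: right strategy, correct computations, but the analytic core of the lemma --- the verification of the derivative-maximality hypothesis via the bound $|zp'(z)/q'(\zeta)|\le m$ --- is missing, and that is not a routine technicality but the very content that makes the third-order theory delicate.
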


\section{Preliminaries}

\noindent In this section, we consider the function $q(z):=e^z$ and define the admissibility class $\Psi[\Omega,q]$, where $\Omega\subset \mathbb{C}$. We know that $q$ is analytic and univalent on $\overline{\mathbb{D}}$ with $q(0)=1$ and it maps $\mathbb{D}$ onto the domain $\Delta_{e}$. Since $\mathbb{E}(q)=\phi$ for $\zeta\in \partial \mathbb{D}\setminus \mathbb{E}(q)$ if and only if $\zeta=e^{i\theta}$ $(\theta\in[0,2\pi])$. Now, consider
\begin{equation}
    |q'(\zeta)|=e^{\cos \theta}=|q(\zeta)|=:x(\theta)\label{11 1}
\end{equation}
which has maximum and minimum values of $e$ and $1/e$, respectively. It is evident that $\min |q'(\zeta)|>0$, which implies that $q\in Q(1)$, and consequently, the admissibility class $\Psi[\Omega,q]$ is well-defined. While considering $|\zeta|=1$, we observe that $q(\zeta)\in q(\partial \mathbb{D})=\partial \Delta_e = \{\varpi \in \mathbb{C}: |\log \varpi| = 1\}$. Consequently, we have $|\log q(\zeta)| = 1$ and $\log (q(\zeta)) = e^{i\theta}$ ($ \theta\in[0,2\pi]$), implies that $q(\zeta) = e^{\zeta}$. Moreover, $\zeta q'(\zeta)=e^{i\theta} e^{e^{i\theta}}$ and
\begin{equation}
    \frac{\zeta q''(\zeta)}{q'(\zeta)}=e^{i\theta}.\label{11 2}
\end{equation}
Upon comparing the real parts on both sides of \eqref{11 2}, we obtain
\begin{equation}
    \RE\bigg(\frac{\zeta q''(\zeta)}{q'(\zeta)}\bigg)=\cos \theta=:y(\theta).\label{11 3} 
\end{equation}
The function $y(\theta)$ as defined in \eqref{11 3} achieves its maximum and minimum value at $\theta = 0$ and $\pi$, given by $1$ and $-1$, respectively. Moreover, the class $\Psi[\Omega, e^z]$ is precisely defined as the class of all functions $\xi: \mathbb{C}^3 \times \mathbb{D} \rightarrow \mathbb{C}$ that satisfy the following conditions:
\begin{equation*}
    \xi(r,s,t;z)\notin \Omega,
\end{equation*}
whenever
\begin{equation}
    r=q(\zeta)=e^{e^{i\theta}};\quad s=m\zeta q'(\zeta)=me^{i\theta}e^{e^{i\theta}};\quad \RE\bigg(1+\frac{t}{s}\bigg)\geq m(1+y(\theta)), \label{11 4}
\end{equation}
where $z\in \mathbb{D}$, $\theta\in[0,2\pi]$ and $m\geq 1$. If $\xi:\mathbb{C}^2\times \mathbb{D}\rightarrow \mathbb{C}$, then the admissibility condition \eqref{11 4} becomes 
\begin{equation*}
\xi(e^{e^{i\theta}},me^{i\theta}e^{e^{i\theta}};z)\notin \Omega\quad (z\in \mathbb{D}, \theta\in[0,2\pi], m\geq 1).
\end{equation*}
Taking $q(z)=e^z$, we present below the following lemma, a special case of Miller and Mocanu \cite[Theorem 2.3b]{miller}, which is necessary to prove our main results.

\begin{lemma}\label{8 theorem1}
Let $\xi\in \Psi[\Omega, e^z]$. If $p\in \mathcal{H}[1,n]$ satisfies
\begin{equation*}
        \xi(p(z),zp'(z),z^2p''(z);z)\in \Omega,
\end{equation*}
then $p(z)\prec e^z$.
\end{lemma}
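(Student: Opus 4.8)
The plan is to deduce this lemma directly from Miller and Mocanu's general second-order differential subordination theorem \cite[Theorem 2.3b]{miller}, specialized to the choice $q(z)=e^z$. The substance of the argument is therefore twofold: first, to verify that $q=e^z$ satisfies the structural hypotheses of that theorem, and second, to check that the admissibility class $\Psi[\Omega,e^z]$ constructed in this section coincides exactly with the specialization of the general admissible class $\Psi_n[\Omega,q]$ to $q=e^z$. Once both points are in place, the conclusion $p\prec e^z$ is immediate.

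For the first point, I would simply record that $q(z)=e^z$ is analytic and univalent on $\overline{\mathbb{D}}$ with $q(0)=1=p(0)$, that $\mathbb{E}(q)=\phi$, and that $\min_{|\zeta|=1}|q'(\zeta)|=1/e>0$ by \eqref{11 1}; hence $q\in Q(1)$, matching $p\in\mathcal{H}[1,n]$. This verification has essentially already been carried out in the opening of this section.

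For the second and main point, I would unwind the internal touching-point step underlying Theorem 2.3b to see precisely which boundary data arise, and argue by contradiction. Assume $p\not\prec e^z$. Then that step furnishes a point $z_0\in\mathbb{D}$, a boundary point $\zeta_0=e^{i\theta_0}\in\partial\mathbb{D}$, and a real number $m\geq n$ such that $p(z_0)=q(\zeta_0)$, $z_0p'(z_0)=m\,\zeta_0 q'(\zeta_0)$, and $\RE\bigl(1+z_0p''(z_0)/p'(z_0)\bigr)\geq m\,\RE\bigl(1+\zeta_0 q''(\zeta_0)/q'(\zeta_0)\bigr)$. Writing $r=p(z_0)$, $s=z_0p'(z_0)$, $t=z_0^2p''(z_0)$ and substituting the boundary values computed in \eqref{11 1}--\eqref{11 3}, these three relations reduce to exactly $r=e^{e^{i\theta_0}}$, $s=m\,e^{i\theta_0}e^{e^{i\theta_0}}$, and, since $t/s=z_0p''(z_0)/p'(z_0)$, the inequality $\RE(1+t/s)\geq m(1+y(\theta_0))$ --- that is, the admissibility conditions \eqref{11 4}. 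Because $\xi\in\Psi[\Omega,e^z]$, admissibility then gives $\xi(r,s,t;z_0)\notin\Omega$, contradicting the hypothesis $\xi(p(z),zp'(z),z^2p''(z);z)\in\Omega$ evaluated at $z=z_0$. This contradiction forces $p\prec e^z$.

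The step I expect to be the crux is the identification carried out in \eqref{11 4}: one must confirm that the value $\RE\bigl(1+\zeta_0 q''(\zeta_0)/q'(\zeta_0)\bigr)=1+\cos\theta_0=1+y(\theta_0)$ coming from \eqref{11 3} is exactly what the general admissibility inequality requires, and that no boundary point $\zeta_0$ is excluded since $\mathbb{E}(q)=\phi$. The remaining manipulations --- the simplification $t/s=z_0p''(z_0)/p'(z_0)$ and the matching of $r$ and $s$ with their parametrized forms --- are routine.
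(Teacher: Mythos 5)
Your proposal is correct and takes essentially the same route as the paper: the paper obtains this lemma purely as the specialization of Miller--Mocanu's Theorem 2.3b to $q(z)=e^z$, relying on the preliminary verifications that $\mathbb{E}(q)=\phi$, $\min_{|\zeta|=1}|q'(\zeta)|=1/e>0$ (so $q\in Q(1)$), and that the conditions \eqref{11 4} define the corresponding admissibility class. Your extra step of unwinding the touching-point argument behind Theorem 2.3b is simply the internal proof of the cited theorem, so nothing differs in substance.
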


Further, suppose $q(z)=e^z$, we have
\begin{equation*}
    \zeta^2\frac{q'''(\zeta)}{q'(\zeta)}=e^{2i\theta}.
\end{equation*}
After comparing the real parts of both the sides, we get
\begin{equation}
    \RE\bigg(\zeta^2\frac{q'''(\zeta)}{q'(\zeta)}\bigg)=\cos 2\theta=:w(\theta). \label{11 5} 
\end{equation}
The maximum and minimum values of $w(\theta)$ are $1$ and $-1$, attained at $\theta=0$ and $\pi/2$, respectively. Thus, if $\xi:\mathbb{C}^4\times \mathbb{D}\rightarrow\mathbb{C}$ then $\xi\in \Psi[\Omega,e^z]$, provided $\xi$ satisfies the following conditions:
\begin{equation*}
    \xi(r,s,t,u;z)\notin \Omega\quad \text{whenever}\quad r=q(\zeta)=e^{e^{i\theta}},\quad s=m\zeta q'(\zeta)=me^{i\theta}e^{e^{i\theta}},
\end{equation*}
\begin{equation*}
   \RE\bigg(1+\frac{t}{s}\bigg)\geq m(1+y(\theta)) \quad\text{and}\quad \RE \frac{u}{s}\geq m^2 w(\theta)+3m(k-1)y(\theta),
\end{equation*}
for $z\in \mathbb{D}$, $\theta\in[0,2\pi]$ and $k\geq m\geq2$. Given the above conditions, we present below a special case of Lemma \ref{8 firsttheoremthirdorder}, which is needed for deriving our results in the last section of this article.
\begin{lemma}\label{8 firstlemmathirdorder}
Let $p\in \mathcal{H}[1,n]$ with $m\geq n\geq 2$ such that for $z\in \mathbb{D}$ and $\zeta\in \partial\mathbb{D}$, it satisfies
\begin{equation*}
   |zp'(z)e^{-\zeta}|\leq m.     
\end{equation*}
If $\Omega$ is a set in $\mathbb{C}$ and $\xi\in \Psi[\Omega,e^z]$, then
\begin{equation*}
    \xi(p(z),zp'(z),z^2p''(z),z^3p'''(z);z)\subset \Omega\implies p(z)\prec e^z.
\end{equation*}
\end{lemma}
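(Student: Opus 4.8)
The plan is to obtain this statement as a direct specialization of the general third-order result, Lemma~\ref{8 firsttheoremthirdorder}, to the choice $q(z)=e^z$ and $a=1$. Thus the entire argument reduces to two verifications: (i) that the hypotheses of Lemma~\ref{8 firsttheoremthirdorder} are satisfied for this $q$, and (ii) that the admissibility class $\Psi[\Omega,e^z]$ introduced in this section coincides with the specialization of the abstract admissible class $\Psi_n[\Omega,1]$ to $q=e^z$. Once both are in place, the conclusion $p\prec e^z$ is immediate, and no analytic input beyond the boundary computations \eqref{11 1}--\eqref{11 5} is required.

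First I would record that $q(z)=e^z\in Q(1)$, which is precisely what the opening of this section establishes: $q$ is analytic and univalent on $\overline{\mathbb{D}}$ with $q(0)=1$, the exceptional set is $\mathbb{E}(q)=\emptyset$, and by \eqref{11 1} we have $\min_{\zeta\in\partial\mathbb{D}}|q'(\zeta)|=1/e>0$, so $q'(\zeta)\neq 0$ on $\partial\mathbb{D}$. Since $\mathbb{E}(q)=\emptyset$, we have $\partial\mathbb{D}\setminus\mathbb{E}(q)=\partial\mathbb{D}$, so every boundary point is $\zeta=e^{i\theta}$ with $\theta\in[0,2\pi]$. Next I would check the growth hypothesis of Lemma~\ref{8 firsttheoremthirdorder}: because $q'(z)=e^z$, we have $q'(\zeta)=e^{\zeta}$, whence
\begin{equation*}
  \left|\frac{zp'(z)}{q'(\zeta)}\right|=\left|zp'(z)e^{-\zeta}\right|\le m,
\end{equation*}
which is exactly the assumption imposed in the statement.

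The only substantive step is to identify the two admissibility classes. For $q(z)=e^z$ every derivative equals $e^z$, so $\zeta q''(\zeta)/q'(\zeta)=\zeta$ and $\zeta^2 q'''(\zeta)/q'(\zeta)=\zeta^2$; taking real parts along $\zeta=e^{i\theta}$ gives \eqref{11 3} and \eqref{11 5}, namely $\RE(\zeta q''(\zeta)/q'(\zeta))=\cos\theta=y(\theta)$ and $\RE(\zeta^2 q'''(\zeta)/q'(\zeta))=\cos 2\theta=w(\theta)$. Substituting these into the three defining inequalities of $\Psi_n[\Omega,1]$, together with $r=q(\zeta)=e^{e^{i\theta}}$ and $s=m\zeta q'(\zeta)=me^{i\theta}e^{e^{i\theta}}$, converts them verbatim into the conditions $\RE(1+t/s)\ge m(1+y(\theta))$ and $\RE(u/s)\ge m^2 w(\theta)+3m(k-1)y(\theta)$ that define $\Psi[\Omega,e^z]$. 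Hence the two classes agree, and with $\xi\in\Psi[\Omega,e^z]$ and $\xi(p(z),zp'(z),z^2p''(z),z^3p'''(z);z)\subset\Omega$, Lemma~\ref{8 firsttheoremthirdorder} yields $p\prec e^z$.

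I do not expect a genuine obstacle here, since the heavy lifting---evaluating $q'$, $q''$, $q'''$ on $\partial\mathbb{D}$ and extracting the real-part quantities $y(\theta)$ and $w(\theta)$---was front-loaded into \eqref{11 1}--\eqref{11 5}. The one point meriting care is the bookkeeping: keeping the parameter ranges $k\ge m\ge n\ge 2$ consistent between the two formulations, and confirming that, because $\mathbb{E}(q)=\emptyset$, the boundary inequalities must be imposed over the full range $\theta\in[0,2\pi]$ rather than on a proper arc of $\partial\mathbb{D}$.
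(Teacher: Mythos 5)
Your proposal is correct and matches the paper's approach: the paper presents this lemma precisely as the specialization of Lemma~\ref{8 firsttheoremthirdorder} to $q(z)=e^z$, with the verification that $q\in Q(1)$, the identification $q'(\zeta)=e^{\zeta}$ (giving the hypothesis $|zp'(z)e^{-\zeta}|\leq m$), and the boundary quantities $y(\theta)=\cos\theta$ and $w(\theta)=\cos 2\theta$ all front-loaded into the preliminaries \eqref{11 1}--\eqref{11 5}, exactly as you describe. Your write-up simply makes explicit the bookkeeping that the paper leaves implicit.
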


\section{Second order differential subordination}
\noindent In this section, we discuss the following second order differential subordination implications
\begin{equation*}
    p(z)+\gamma_1 zp'(z)+\gamma_2 z^2p''(z)\prec h(z) \implies p(z)\prec e^z 
\end{equation*}
for different choices of $h(z)$, by deriving conditions on the constants $\gamma_1$ and $\gamma_2$. 

We begin with following theorem by taking $h(z)=1+\sin z$:

\begin{theorem}\label{11 sinetheorem}
Suppose $\gamma_1$, $\gamma_2>0$ and either $\gamma_1-\gamma_2-e(e+1)\geq e\sinh 1$ or $1-e(1+\gamma_1+\gamma_2)\geq \sinh 1$. If $p$ is an analytic function in $\mathbb{D}$ such that $p(0)=1$ and
\begin{equation*}
        p(z)+\gamma_1 zp'(z)+\gamma_2 z^2p''(z)\prec 1+\sin z
\end{equation*}
then $p(z)\prec e^z$.
\end{theorem}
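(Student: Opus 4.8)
The plan is to read the hypothesis as an admissibility statement and apply Lemma \ref{8 theorem1} with $q(z)=e^z$. Define the operator $\psi:\mathbb{C}^3\times\mathbb{D}\to\mathbb{C}$ by $\psi(r,s,t;z)=r+\gamma_1 s+\gamma_2 t$ and set $\Omega=\{1+\sin z:z\in\mathbb{D}\}$. Since $\psi(p(z),zp'(z),z^2p''(z);z)=p(z)+\gamma_1 zp'(z)+\gamma_2 z^2p''(z)\prec 1+\sin z$ forces these values into $\Omega$, Lemma \ref{8 theorem1} reduces the whole problem to verifying that $\psi\in\Psi[\Omega,e^z]$, i.e. that $\psi(r,s,t;z)\notin\Omega$ for every triple $(r,s,t)$ meeting the admissibility relations \eqref{11 4}. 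Granting this, the conclusion $p\prec e^z$ is immediate.

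First I would replace $\Omega$ by a tractable superset. Because $|\sin z|\le\sinh|z|<\sinh 1$ for $z\in\mathbb{D}$, every $w\in\Omega$ satisfies $|w-1|<\sinh 1$, so $\Omega\subseteq\{w:|w-1|<\sinh 1\}$. It therefore suffices to prove $|\psi(r,s,t;z)-1|\ge\sinh 1$ for admissible data. I would then substitute $r=e^{e^{i\theta}}$ and $s=me^{i\theta}e^{e^{i\theta}}$ (so $|s|=me^{\cos\theta}$), write $\psi-1=(r-1)+s(\gamma_1+\gamma_2\,t/s)$, and recall from \eqref{11 4} that $\RE(t/s)\ge m(1+\cos\theta)-1$ while $\IM(t/s)$ is unconstrained.

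The device that controls the free imaginary part of $t/s$ is to project onto the direction of $s$: since $\RE\!\left(\tfrac{\bar s}{|s|}(\psi-1)\right)\le|\psi-1|$ and the factor $\tfrac{\bar s}{|s|}s=|s|$ is real, the unknown $\IM(t/s)$ drops out, leaving
\begin{equation*}
|\psi-1|\ge e^{\cos\theta}\cos\theta-\cos(\theta+\sin\theta)+me^{\cos\theta}\bigl[\gamma_1+\gamma_2(m(1+\cos\theta)-1)\bigr].
\end{equation*}
The task is then to show that the right-hand side is at least $\sinh 1$ for all $\theta\in[0,2\pi]$ and $m\ge 1$. The estimate is cleanest on the real axis, where $s$ is real: the case $\theta=0$ pushes $\psi$ to the right of the disk and yields the condition built from $\gamma_1+\gamma_2$, while $\theta=\pi$ pushes $\psi$ to the left and yields the condition built from $\gamma_1-\gamma_2$. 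Showing that exactly one of the two displayed hypotheses of the theorem secures the bound then allows Lemma \ref{8 theorem1} to finish the argument.

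The main obstacle is precisely this uniform minimization over $(\theta,m)$. The quadratic-in-$m$ term has nonnegative leading coefficient $\gamma_2 e^{\cos\theta}(1+\cos\theta)$, so the binding $m$ is typically the smallest admissible one, but the genuine difficulty is the $\theta$-dependence: the transcendental terms $e^{\cos\theta}\cos\theta$ and $\cos(\theta+\sin\theta)$ do not combine monotonically with the $\gamma$-terms, and one must rule out that some interior $\theta\in(0,\pi)$ beats the two real-axis endpoints. I would also expect that a sharper description of $\partial\Omega$—for instance the fact that $\RE w\ge 1-\sin 1$ throughout $\Omega$—is needed to relax the left-hand estimate, which is what permits the stated hypotheses to be weaker than the crude disk bound $|\psi-1|\ge\sinh 1$ would otherwise demand. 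Justifying this reduction to the endpoints, rather than the elementary algebra at $\theta\in\{0,\pi\}$, is where the real work lies.
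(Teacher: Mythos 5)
Your setup is the same as the paper's (same operator, same $\Omega=h(\mathbb{D})$, same enlargement of $\Omega$ to the disk $\{w:|w-1|<\sinh 1\}$, same reduction through Lemma \ref{8 theorem1}), and your projection device is correct: since $\bar s/|s|=e^{-i(\theta+\sin\theta)}$, one checks that $\RE\bigl(\tfrac{\bar s}{|s|}(\psi-1)\bigr)$ is exactly the quantity you display, and it is a valid lower bound for $|\psi-1|$ that eliminates the unconstrained $\IM(t/s)$; it is even slightly sharper than the paper's chain $|\psi-1|\ge|\gamma_1 s+\gamma_2 t|-|r-1|\ge\gamma_1|s|\RE\bigl(1+\tfrac{\gamma_2}{\gamma_1}\tfrac{t}{s}\bigr)-|r|-1$. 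The genuine gap is that you stop there: the uniform lower bound over $(\theta,m)$, which \emph{is} the admissibility verification, is never carried out, and the difficulties you anticipate (ruling out interior $\theta$, needing a sharper description of $\partial\Omega$ such as $\RE w\ge 1-\sin 1$) are phantom. The theorem's first hypothesis forces $\gamma_1-\gamma_2\ge e(e+1+\sinh 1)>0$, so in your bound the bracket satisfies $\gamma_1+\gamma_2(m(1+\cos\theta)-1)=(\gamma_1-\gamma_2)+\gamma_2 m(1+\cos\theta)\ge\gamma_1-\gamma_2>0$, whence $me^{\cos\theta}\bigl[\gamma_1+\gamma_2(m(1+\cos\theta)-1)\bigr]\ge(\gamma_1-\gamma_2)/e$; combining this with the crude bounds $e^{\cos\theta}\cos\theta\ge-1/e$ and $-\cos(\theta+\sin\theta)\ge-1$ gives, uniformly in $\theta\in[0,2\pi]$ and $m\ge1$,
\begin{equation*}
|\psi-1|\;\ge\;\frac{\gamma_1-\gamma_2}{e}-1-\frac{1}{e}\;\ge\;e-\frac{1}{e}+\sinh 1\;>\;\sinh 1,
\end{equation*}
so $\psi\notin\Omega$ and Lemma \ref{8 theorem1} finishes. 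Term-by-term estimation suffices, exactly as in the paper, whose hypothesis $\gamma_1-\gamma_2-e(e+1)\ge e\sinh 1$ is calibrated to the even cruder bound $\tfrac{1}{e}(\gamma_1-\gamma_2)-e-1\ge\sinh 1$; no exact minimization over $\theta$ is required because one only needs a lower bound exceeding $\sinh 1$, not the true minimum.

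A secondary flaw: your plan to obtain the theorem's second hypothesis from the endpoint $\theta=0$ cannot work. Your projection bound always carries the positive term $+\gamma_1|s|$, so it can only produce conditions of the form ``$\gamma_1-\gamma_2$ large''; at $\theta=0$ it reads $e-\cos 1+me\bigl[\gamma_1+\gamma_2(2m-1)\bigr]$, which exceeds $\sinh 1$ automatically and yields no condition at all. In the paper the second alternative comes from a different decomposition, the reverse triangle inequality $|r-1|-|\gamma_1 s+\gamma_2 t|$ (which, incidentally, would require an upper bound on $|t|$ that admissibility does not supply). You need not worry about it in any case: for $\gamma_1,\gamma_2>0$ the condition $1-e(1+\gamma_1+\gamma_2)\ge\sinh 1$ has negative left-hand side and is therefore vacuous, so the theorem is fully proved once the first alternative is handled as above.
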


\begin{proof}
Suppose $h(z)=1+\sin z$ for $z\in \mathbb{D}$ and $\Omega:=\{\varpi\in \mathbb{C}:|\arcsin (\varpi-1)|<1\}$. Define $\xi:\mathbb{C}^3\times \mathbb{D}\rightarrow \mathbb{C}$ as $\xi(r,s,t;z)=r+\gamma_1 s+\gamma_2 t$. For $\xi\in \Psi[\Omega, \Delta_e]$, we must have $\xi(r,s,t;z)\notin \Omega$. Through \cite[Lemma 3.3]{chosine}, we note that the smallest disk containing $\Omega$ is $\{\omega\in \mathbb{C}: |\omega-1|<\sinh 1\}$. Since,
\begin{equation*}
        |\xi(r,s,t;z)-1|=|r+\gamma_1 s+\gamma_2 t-1|.
\end{equation*}
Now, we have two different cases:\\
\noindent \underline{\textbf{Case I:}} If we express $|r+\gamma_1 s+\gamma_2 t-1|$ as
\begin{align*}
       |r+\gamma_1 s+\gamma_2 t-1|&\geq |\gamma_1 s+\gamma_2 t|-|r-1|\\
       & \geq\gamma_1 |s|\bigg|1+\frac{\gamma_2}{\gamma_1}\frac{t}{s}\bigg|-(|r|+1)\\
    &\geq \gamma_1 |s|\RE\bigg(1+\frac{\gamma_2}{\gamma_1}\frac{t}{s}\bigg)-|r|-1\\
    &\geq m \gamma_1 x(\theta)\bigg(1+\frac{\gamma_2}{\gamma_1}(y(\theta)m+m-1)\bigg)-x(\theta)-1.  
\end{align*}
Here, $x(\theta)$ and $y(\theta)$ are defined in \eqref{11 1} and \eqref{11 3}, respectively. Since $m\geq 1$, we obtain  
\begin{align*}
  |r+\gamma_1 s+\gamma_2 t-1|  &\geq \gamma_1 x(\theta)\bigg(1+\frac{\gamma_2}{\gamma_1}y(\theta)\bigg)-x(\theta)-1\\
  &\geq \frac{1}{e}(\gamma_1-\gamma_2)-e-1\\
       &\geq \sinh 1.
\end{align*}
\noindent \underline{\textbf{Case II:}} If we express $|r+\gamma_1 s+\gamma_2 t-1|$ as
\begin{align*}
|r+\gamma_1 s+\gamma_2 t-1|&\geq|r-1|-|\gamma_1 s+\gamma_2 t|\\
       & \geq 1-|r|-\gamma_1 |s|\bigg|1+\frac{\gamma_2}{\gamma_1}\frac{t}{s}\bigg|\\
    &\geq 1-|r|-\gamma_1 |s|\RE\bigg(1+\frac{\gamma_2}{\gamma_1}\frac{t}{s}\bigg)\\
    &\geq 1-x(\theta)-m \gamma_1 x(\theta)\bigg(1+\frac{\gamma_2}{\gamma_1}(y(\theta)m+m-1)\bigg).  
\end{align*}          
Here, $x(\theta)$ and $y(\theta)$ are defined in \eqref{11 1} and \eqref{11 3}, respectively. Since $m\geq 1$, we obtain
\begin{align*}
  |r+\gamma_1 s+\gamma_2 t-1|  &\geq 1-x(\theta)-\gamma_1 x(\theta)\bigg(1+\frac{\gamma_2}{\gamma_1}y(\theta)\bigg)\\ 
  &\geq 1-e-e(\gamma_1+\gamma_2)\\
       &\geq \sinh 1.
\end{align*}
Clearly, $\xi(r,s,t;z)\notin\{\omega\in \mathbb{C}: |\omega-1|<\sinh 1\}$ which is enough to conclude that $\xi(r,s,t;z)\notin \Omega$. Therefore, $\xi\in \Psi[\Omega,\Delta_e]$ and thus $p(z)\prec e^z$ through Lemma \ref{8 theorem1}.
\end{proof}

\begin{theorem}
Suppose $\gamma_1$, $\gamma_2>0$ and either $\gamma_1-\gamma_2-e(e+1)\geq e^2$ or $1-e(1+\gamma_1+\gamma_2)\geq e$. If $p$ is an analytic function in $\mathbb{D}$ such that $p(0)=1$ and
\begin{equation*}
        p(z)+\gamma_1 zp'(z)+\gamma_2 z^2p''(z)\prec 1+ze^z
\end{equation*}
then $p(z)\prec e^z$.
\end{theorem}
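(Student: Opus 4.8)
The plan is to mirror the proof of Theorem \ref{11 sinetheorem} step for step, the only substantive change being the target domain and the radius of the enclosing disk. I would set $h(z)=1+ze^z$, write $\Omega:=h(\mathbb{D})$, and take the candidate operator $\xi:\mathbb{C}^3\times\mathbb{D}\rightarrow\mathbb{C}$, $\xi(r,s,t;z)=r+\gamma_1 s+\gamma_2 t$; once $\xi\in\Psi[\Omega,\Delta_e]$ is verified, Lemma \ref{8 theorem1} gives $p(z)\prec e^z$ at once. The single new geometric input is that, since $h(z)-1=ze^z$ and $\max_{|z|=1}|ze^z|=\max_{\theta}e^{\cos\theta}=e$ (attained at $\theta=0$), the smallest disk centered at $1$ containing $\Omega$ is $\{\omega\in\mathbb{C}:|\omega-1|<e\}$. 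This plays the role that \cite[Lemma 3.3]{chosine} played for the sine domain, and the corresponding cardioid estimate is available from \cite{kumar-ganganiaCardioid-2021}.

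Next I would bound $|\xi(r,s,t;z)-1|=|r+\gamma_1 s+\gamma_2 t-1|$ from below, substituting the admissibility data $r=e^{e^{i\theta}}$, $s=me^{i\theta}e^{e^{i\theta}}$ and the lower bound on $\RE(1+(\gamma_2/\gamma_1)(t/s))$ supplied by \eqref{11 4}. As in the sine proof I would split into two cases according to the direction of the triangle inequality: in Case I one writes $|r+\gamma_1 s+\gamma_2 t-1|\geq|\gamma_1 s+\gamma_2 t|-|r-1|$, while in Case II one writes $|r+\gamma_1 s+\gamma_2 t-1|\geq|r-1|-|\gamma_1 s+\gamma_2 t|$. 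Using $m\geq1$ to discard the $m$-factors in the direction that weakens each estimate, then bounding the terms through $x(\theta)=e^{\cos\theta}\in[1/e,e]$ and $e^{\cos\theta}\cos\theta\in[-1/e,e]$, Case I collapses to the requirement $\frac{1}{e}(\gamma_1-\gamma_2)-e-1\geq e$ and Case II to $1-e-e(\gamma_1+\gamma_2)\geq e$. Clearing denominators and rearranging turns these into exactly the two stated hypotheses $\gamma_1-\gamma_2-e(e+1)\geq e^2$ and $1-e(1+\gamma_1+\gamma_2)\geq e$.

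Under either hypothesis we then have $|\xi(r,s,t;z)-1|\geq e$, so $\xi(r,s,t;z)\notin\{\omega\in\mathbb{C}:|\omega-1|<e\}\supseteq\Omega$, which is precisely the admissibility condition $\xi(r,s,t;z)\notin\Omega$. Hence $\xi\in\Psi[\Omega,\Delta_e]$ and Lemma \ref{8 theorem1} finishes the argument.

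I expect the only real difficulty to be bookkeeping rather than ideas. The delicate points are pinning down the correct radius $e$ of the disk about $1$ enclosing the cardioid image $1+ze^z$, and then applying $m\geq1$ in the right direction (keeping $m$ where it helps the lower bound and setting $m=1$ where it is safe to do so), while choosing the extremal values $x(\theta)=e$ or $1/e$ and the sign of $\cos\theta$ consistently within each case. These are the same subtleties that make the sine-theorem estimate delicate, and they transfer directly to the present setting with $\sinh 1$ replaced throughout by $e$.
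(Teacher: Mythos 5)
Your proposal is correct and follows essentially the same route as the paper's own proof: the same operator $\xi(r,s,t;z)=r+\gamma_1 s+\gamma_2 t$, the same enclosing disk $\{\omega\in\mathbb{C}:|\omega-1|<e\}$ (which the paper obtains from \cite[Lemma 3.3]{kumar-ganganiaCardioid-2021} and you additionally verify directly via $|ze^z|=|z|e^{\cos\theta}<e$ on $\mathbb{D}$), the same two triangle-inequality cases with $m\geq 1$ and the extremal bounds $x(\theta)\in[1/e,e]$, $x(\theta)y(\theta)\in[-1/e,e]$, and the same conclusion through Lemma \ref{8 theorem1}. No gaps; the argument matches the paper's.
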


\begin{proof}
Suppose $h(z)=1+ze^z$ for $z\in \mathbb{D}$ and $\Omega=h(\mathbb{D})$. Let us define $\xi:\mathbb{C}^3\times \mathbb{D}\rightarrow \mathbb{C}$ as $\xi(r,s,t;z)=r+\gamma_1 s+\gamma_2 t$. For $\xi\in \Psi[\Omega, \Delta_e]$, we need to have $\xi(r,s,t;z)\notin \Omega$. Through \cite[Lemma 3.3]{kumar-ganganiaCardioid-2021},
we observe that the smallest disk containing $\Omega$ is $\{\omega\in \mathbb{C}: |\omega-1|<e\}$. Since
\begin{equation*}
        |\xi(r,s,t;z)-1|=|r+\gamma_1 s+\gamma_2 t-1|.
\end{equation*} 
We consider two different cases given as follows:\\
\noindent \underline{\textbf{Case I:}} If we express $|r+\gamma_1 s+\gamma_2 t-1|$ as
\begin{align*}
  |r+\gamma_1 s+\gamma_2 t-1|  &\geq \gamma_1 x(\theta)\bigg(1+\frac{\gamma_2}{\gamma_1}y(\theta)\bigg)-x(\theta)-1\\
  &\geq \frac{1}{e}(\gamma_1-\gamma_2)-e-1\\
       &\geq e.
\end{align*}
\noindent \underline{\textbf{Case II:}} If we express $|r+\gamma_1 s+\gamma_2 t-1|$ as
\begin{align*}
  |r+\gamma_1 s+\gamma_2 t-1|  &\geq 1-x(\theta)-\gamma_1 x(\theta)\bigg(1+\frac{\gamma_2}{\gamma_1}y(\theta)\bigg)\\
  &\geq 1-e-e(\gamma_1+\gamma_2)\\
       &\geq e.
\end{align*}
Clearly, $\xi(r,s,t;z)\notin\{\omega\in \mathbb{C}: |\omega-1|<e\}$ which suffices to conclude that $\xi(r,s,t;z)\notin \Omega$. Therefore, $\xi\in \Psi[\Omega,\Delta_e]$ and thus $p(z)\prec e^z$ by using Lemma \ref{8 theorem1}.
\end{proof}

\begin{theorem}
Suppose $\gamma_1$, $\gamma_2>0$ and either $\gamma_1-\gamma_2-e(e+1)\geq \sqrt{2}e$ or $1-e(1+\gamma_1+\gamma_2)\geq \sqrt{2}$. If $p$ is an analytic function in $\mathbb{D}$ such that $p(0)=1$ and
\begin{equation*}
        p(z)+\gamma_1 zp'(z)+\gamma_2 z^2p''(z)\prec z+\sqrt{1+z^2}
\end{equation*}
then $p(z)\prec e^z$.
\end{theorem}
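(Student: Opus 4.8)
The plan is to follow the admissibility framework used in the preceding two theorems verbatim, the only genuinely new ingredient being the geometry of the lune $h(\mathbb{D})$ for $h(z)=z+\sqrt{1+z^2}$. First I would set $\Omega:=h(\mathbb{D})$ and define $\xi:\mathbb{C}^3\times\mathbb{D}\to\mathbb{C}$ by $\xi(r,s,t;z)=r+\gamma_1 s+\gamma_2 t$, so that the hypothesis reads $\xi(p(z),zp'(z),z^2p''(z);z)\in\Omega$. By Lemma~\ref{8 theorem1} it then suffices to verify that $\xi\in\Psi[\Omega,e^z]$, i.e.\ that $\xi(r,s,t;z)\notin\Omega$ for every admissible triple $(r,s,t)$ described in \eqref{11 4}.

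The key reduction is again to replace the awkward lune $\Omega$ by the smallest disk containing it. For the Raina--Sok\'{o}\l{} function this disk is $\{\omega\in\mathbb{C}:|\omega-1|<\sqrt{2}\}$ (see \cite{raina}); concretely one checks that $\max_{\theta}|e^{i\theta}+\sqrt{1+e^{2i\theta}}-1|=\sqrt{2}$, the extreme being attained at $\theta=0$, where $h(1)=1+\sqrt{2}$. Hence it is enough to prove the single inequality $|\xi(r,s,t;z)-1|=|r+\gamma_1 s+\gamma_2 t-1|\ge\sqrt{2}$, since this already forces $\xi(r,s,t;z)$ to lie outside the disk and \emph{a fortiori} outside $\Omega$.

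To obtain this lower bound I would split into two cases by applying the triangle inequality in the two possible orders, exactly mirroring the earlier proofs. Using the admissibility data $|r|=x(\theta)=e^{\cos\theta}\in[1/e,e]$, $|s|=m\,x(\theta)$ and $\RE(1+t/s)\ge m(1+y(\theta))$ with $y(\theta)=\cos\theta$, together with $m\ge1$, Case~I yields
\begin{align*}
|r+\gamma_1 s+\gamma_2 t-1|&\ge |\gamma_1 s+\gamma_2 t|-|r-1|\\
&\ge \gamma_1 x(\theta)\Big(1+\tfrac{\gamma_2}{\gamma_1}y(\theta)\Big)-x(\theta)-1\\
&\ge \tfrac1e(\gamma_1-\gamma_2)-e-1,
\end{align*}
where the last step bounds each summand $\gamma_1 x(\theta)$, $\gamma_2 x(\theta)y(\theta)$ and $-x(\theta)$ by its extreme value over $\theta$ (namely $\gamma_1/e$, $-\gamma_2/e$ and $-e$). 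Case~II instead estimates $|r+\gamma_1 s+\gamma_2 t-1|\ge|r-1|-|\gamma_1 s+\gamma_2 t|\ge 1-e-e(\gamma_1+\gamma_2)$. The two standing hypotheses $\gamma_1-\gamma_2-e(e+1)\ge\sqrt{2}\,e$ and $1-e(1+\gamma_1+\gamma_2)\ge\sqrt{2}$ are precisely what force these two lower bounds to be at least $\sqrt{2}$, so in either case $\xi(r,s,t;z)\notin\Omega$ and $p(z)\prec e^z$ follows from Lemma~\ref{8 theorem1}.

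I expect the only real obstacle to be the geometric claim that the smallest disk centred at $1$ containing the lune has radius $\sqrt{2}$; once that radius is in hand, the remainder is the same term-by-term extremization in $x(\theta)$ and $y(\theta)$ carried out twice already, so no new analytic difficulty arises.
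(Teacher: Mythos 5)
Your proposal is correct and matches the paper's proof essentially step for step: the same choice of $\Omega$ and $\xi$, the same reduction of the lune $h(\mathbb{D})$ to the smallest containing disk $\{\omega\in\mathbb{C}:|\omega-1|<\sqrt{2}\}$ (which the paper justifies via the two bounding circles $C_1:|z-1|=\sqrt{2}$ and $C_2:|z+1|=\sqrt{2}$ of Fig.~\ref{crescent}, and you via maximizing $|h(e^{i\theta})-1|$), followed by the identical Case I/Case II triangle-inequality estimates and the appeal to Lemma~\ref{8 theorem1}. The one blemish you share with the paper is that Case II needs an \emph{upper} bound on $|\gamma_1 s+\gamma_2 t|$, which the admissibility data (a lower bound on $\RE(1+t/s)$, with $|s|=m\,x(\theta)$ and $m\geq 1$ unbounded) cannot supply; this is immaterial here only because the second hypothesis $1-e(1+\gamma_1+\gamma_2)\geq\sqrt{2}$ is unsatisfiable for $\gamma_1,\gamma_2>0$, so the theorem effectively rests on Case I alone, exactly as in the paper.
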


\begin{proof}
Suppose $h(z)=z+\sqrt{1+z^2}$ for $z\in \mathbb{D}$ and $\Omega=\{\varpi\in \mathbb{C}:|\varpi^2-1|<2|\varpi|\}$. We define $\xi:\mathbb{C}^3\times \mathbb{D}\rightarrow \mathbb{C}$ as $\xi(r,s,t;z)=r+\gamma_1 s+\gamma_2 t$. For $\xi\in \Psi[\Omega, \Delta_e]$, we must have $\xi(r,s,t;z)\notin \Omega$. From the graph of $z+\sqrt{1+z^2}$ (see Fig. \ref{crescent}), we note that $\Omega$ is constructed by the circles
\begin{equation*}
        C_1:|z-1|=\sqrt{2}\quad\text{and}\quad C_2:|z+1|=\sqrt{2}.
\end{equation*}

\begin{figure}[h]
\centering

   \includegraphics[width=8cm, height=5cm]{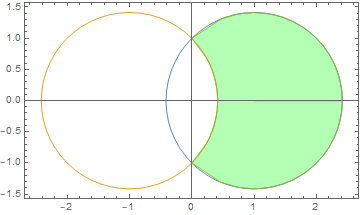}
 \caption{Graph of two circles, namely $C_1$ (blue boundary) and $C_2$ (orange boundary). While the shaded region (solid green) represents $z+\sqrt{1+z^2}$.}
  \label{crescent}
\end{figure}

\noindent It is obvious that $\Omega$ contains the disk enclosed by $C_1$ and excludes the portion of the disk enclosed by $C_2\cap C_1$. Since
\begin{equation*}
        |\xi(r,s,t;z)-1|=|r+\gamma_1 s+\gamma_2 t-1|.
\end{equation*}
Further, we have\\
\underline{\textbf{Case I:}} If we express $|r+\gamma_1 s+\gamma_2 t-1|$ as
\begin{align*}
  |r+\gamma_1 s+\gamma_2 t-1|  &\geq \gamma_1 x(\theta)\bigg(1+\frac{\gamma_2}{\gamma_1}y(\theta)\bigg)-x(\theta)-1\\
  &\geq \frac{1}{e}(\gamma_1-\gamma_2)-e-1\\
       &\geq \sqrt{2}.
\end{align*} 
\noindent \underline{\textbf{Case II:}} If we express $|r+\gamma_1 s+\gamma_2 t-1|$ as
\begin{align*}
  |r+\gamma_1 s+\gamma_2 t-1|  &\geq 1-x(\theta)-\gamma_1 x(\theta)\bigg(1+\frac{\gamma_2}{\gamma_1}y(\theta)\bigg)\\ 
  &\geq 1-e-e(\gamma_1+\gamma_2)\\
       &\geq \sqrt{2}.
\end{align*}
The fact that $\xi(r,s,t;z)\notin C_1$ suffices us to deduce that $\xi(r,s,t;z)\notin \Omega$. 
Consequently, $\xi\in \Psi[\Omega,\Delta_e]$ and thus $p(z)\prec e^z$ by using Lemma \ref{8 theorem1}.
\end{proof}

Now, we conclude this section by choosing $h(z)$ to be $1+\sinh^{-1}z$. 
\begin{theorem}
Suppose $\gamma_1$, $\gamma_2>0$ and either $2(\gamma_1-\gamma_2-e(e+1))\geq \pi e$ or $2(1-e(1+\gamma_1+\gamma_2))\geq \pi$. If $p$ is an analytic function in $\mathbb{D}$ such that $p(0)=1$ and
\begin{equation*}
        p(z)+\gamma_1 zp'(z)+\gamma_2 z^2p''(z)\prec 1+\sinh^{-1} z
\end{equation*}
then $p(z)\prec e^z$.
\end{theorem}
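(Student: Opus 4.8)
The plan is to follow verbatim the uniform template established in the three preceding theorems of this section, changing only the target domain $\Omega$ and the radius of the disk that controls it. First I would set $h(z)=1+\sinh^{-1}z$, take $\Omega=h(\mathbb{D})$, and define the admissible operator $\xi:\mathbb{C}^3\times\mathbb{D}\rightarrow\mathbb{C}$ by $\xi(r,s,t;z)=r+\gamma_1 s+\gamma_2 t$, so that the subordination hypothesis reads precisely $\xi(p(z),zp'(z),z^2p''(z);z)\in\Omega$. By Lemma \ref{8 theorem1} it then suffices to verify that $\xi\in\Psi[\Omega,e^z]$, i.e. that $\xi(r,s,t;z)\notin\Omega$ whenever $(r,s,t)$ satisfy the admissibility relations in \eqref{11 4}.

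The one genuinely new ingredient is the geometry of $\Omega$. I would record, citing Arora and Kumar \cite{kush} on the class $\mathcal{S}^{*}_{\rho}$, that the smallest disk (centered at $h(0)=1$) containing $\Omega$ is $\{\omega\in\mathbb{C}:|\omega-1|<\pi/2\}$. The point is that $\max_{|z|=1}|\sinh^{-1}z|=|\sinh^{-1}(\pm i)|=\pi/2$: the imaginary part of $\sinh^{-1}$ attains its extreme value $\pm\pi/2$ at the branch points $z=\pm i$ (where $\sinh^{-1}(i)=\log i=i\pi/2$), while the real part never exceeds $\sinh^{-1}1=\log(1+\sqrt2)<\pi/2$. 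Consequently it is enough to force $|\xi(r,s,t;z)-1|\geq\pi/2$.

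From here the argument is purely the estimation already performed twice above. Writing $|r|=x(\theta)=e^{\cos\theta}$ from \eqref{11 1} and $y(\theta)=\cos\theta$ from \eqref{11 3}, and using $\RE(1+t/s)\geq m(1+y(\theta))$ together with $m\geq 1$, I would split into two cases according to which triangle inequality is applied to $|r+\gamma_1 s+\gamma_2 t-1|$. In Case I (bounding $|\gamma_1 s+\gamma_2 t|-|r-1|$ from below) the chain of inequalities collapses to $\tfrac{1}{e}(\gamma_1-\gamma_2)-e-1$, which is $\geq\pi/2$ exactly under the first hypothesis $2(\gamma_1-\gamma_2-e(e+1))\geq\pi e$. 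In Case II (bounding $|r-1|-|\gamma_1 s+\gamma_2 t|$ from below) it collapses to $1-e-e(\gamma_1+\gamma_2)$, which is $\geq\pi/2$ exactly under the second hypothesis $2(1-e(1+\gamma_1+\gamma_2))\geq\pi$. In either case $\xi(r,s,t;z)$ lies outside the disk of radius $\pi/2$ about $1$, hence outside $\Omega$, so $\xi\in\Psi[\Omega,e^z]$ and Lemma \ref{8 theorem1} delivers $p(z)\prec e^z$.

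The main obstacle, and really the only nonroutine step, is justifying the radius $\pi/2$: one must be certain that the boundary point of $\sinh^{-1}(\mathbb{D})$ farthest from the origin is $\pm i\pi/2$ rather than some point on the arc with both coordinates nonzero. I would settle this either by a direct extremal analysis of $\theta\mapsto|\sinh^{-1}(e^{i\theta})|$ or, more economically, by quoting the explicit description of the region $\Omega$ from \cite{kush}. Everything else is identical bookkeeping to the $1+\sin z$ and $1+ze^z$ cases, with the controlling constant $\sinh 1$ (respectively $e$) replaced throughout by $\pi/2$.
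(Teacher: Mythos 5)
Your proposal is correct and follows essentially the same route as the paper's proof: the same operator $\xi(r,s,t;z)=r+\gamma_1 s+\gamma_2 t$, the same appeal to Arora and Kumar (the paper cites \cite[Remark 2.7]{kush}) for the fact that $\{\omega\in\mathbb{C}:|\omega-1|<\pi/2\}$ is the smallest disk containing $\Omega$, and the same two-case triangle-inequality estimates reducing to $\tfrac{1}{e}(\gamma_1-\gamma_2)-e-1\geq\pi/2$ and $1-e-e(\gamma_1+\gamma_2)\geq\pi/2$, followed by Lemma \ref{8 theorem1}. The only cosmetic difference is that you additionally sketch a direct extremal justification of the radius $\pi/2$ (via $\sinh^{-1}(\pm i)=\pm i\pi/2$), which the paper leaves entirely to the cited remark.
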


\begin{proof}
Suppose $h(z)=1+\sinh^{-1} z$ for $z\in \mathbb{D}$ and $\Omega=h(\mathbb{D})=\{\varpi\in\mathbb{C}:|\sinh (\varpi-1)|<1\}$. We define $\xi:\mathbb{C}^3\times \mathbb{D}\rightarrow \mathbb{C}$ as $\xi(r,s,t;z)=r+\gamma_1 s+\gamma_2 t$. For $\xi\in \Psi[\Omega, \Delta_e]$, we must have $\xi(r,s,t;z)\notin \Omega$. Through \cite[Remark 2.7]{kush}, we note that the disk $\{\omega\in\mathbb{C}: |\omega-1|<\pi/2\}$ is the smallest disk containing $\Omega$. So,
\begin{equation*}
        |\xi(r,s,t;z)-1|=|r+\gamma_1 s+\gamma_2 t-1|.
\end{equation*}
We consider two different cases given as follows:\\
\underline{\textbf{Case I:}} If we express $|r+\gamma_1 s+\gamma_2 t-1|$ as
\begin{align*}
  |r+\gamma_1 s+\gamma_2 t-1|  &\geq \gamma_1 x(\theta)\bigg(1+\frac{\gamma_2}{\gamma_1}y(\theta)\bigg)-x(\theta)-1\\
  &\geq \frac{1}{e}(\gamma_1-\gamma_2)-e-1\\
       &\geq \frac{\pi}{2}.
\end{align*}

\noindent \underline{\textbf{Case II:}} If we express $|r+\gamma_1 s+\gamma_2 t-1|$ as
\begin{align*}
  |r+\gamma_1 s+\gamma_2 t-1|  &\geq 1-x(\theta)-\gamma_1 x(\theta)\bigg(1+\frac{\gamma_2}{\gamma_1}y(\theta)\bigg)\\ 
  &\geq 1-e-e(\gamma_1+\gamma_2)\\
       &\geq \frac{\pi}{2}.
\end{align*}
Clearly, $\xi(r,s,t;z)$ does not belong to the disk $\{\omega\in \mathbb{C}: |\omega-1|<\pi/2\}$ which suffices us to conclude that $\xi(r,s,t;z)\notin \Omega$. Therefore, $\xi\in \Psi[\Omega,\Delta_e]$ and thus $p(z)\prec e^z$ by using Lemma \ref{8 theorem1}.
\end{proof}

\section{Third order differential subordination}

\noindent The objective of this section is to determine the sufficient conditions, obtained by finding the positive real numbers $\gamma_1$, $\gamma_2$ and $\gamma_3$ involving the constants $m$ and $k$ (defined in Lemma \ref{lemmaformk}) to satisfy the third order differential subordination implication, given by
\begin{equation*}
    p(z)+\gamma_1 z p'(z)+\gamma_2 z^2p''(z)+\gamma_3 z^3p'''(z)\prec h(z)\implies p(z)\prec e^z.
\end{equation*}

\begin{theorem}\label{11 thirdordersine}
Suppose $\gamma_1$, $\gamma_2$, $\gamma_3>0$ and either $\gamma_1-\gamma_2-m^2\gamma_3-3m(k-1)\gamma_3-e(e+1)\geq e\sinh 1$ or $1-e-e(\gamma_1+\gamma_2+m^2\gamma_3+3m(k-1)\gamma_3)\geq e\sinh 1$. If $p$ is an analytic function in $\mathbb{D}$ such that $p(0)=1$ and
\begin{equation*}
        p(z)+\gamma_1 zp'(z)+\gamma_2 z^2p''(z)+\gamma_3 z^3p'''(z)\prec 1+\sin z
\end{equation*}
then $p(z)\prec e^z$.
\end{theorem}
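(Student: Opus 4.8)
The plan is to run the admissibility argument of Theorem~\ref{11 sinetheorem} in the four-variable setting controlled by Lemma~\ref{8 firstlemmathirdorder}. First I would fix $h(z)=1+\sin z$, set $\Omega=h(\mathbb{D})=\{\varpi\in\mathbb{C}:|\arcsin(\varpi-1)|<1\}$, and define $\xi:\mathbb{C}^4\times\mathbb{D}\to\mathbb{C}$ by $\xi(r,s,t,u;z)=r+\gamma_1 s+\gamma_2 t+\gamma_3 u$, so that the hypothesis reads precisely $\xi(p(z),zp'(z),z^2p''(z),z^3p'''(z);z)\subset\Omega$. By Lemma~\ref{8 firstlemmathirdorder} it then suffices to verify $\xi\in\Psi[\Omega,e^z]$, that is, $\xi(r,s,t,u;z)\notin\Omega$ for every admissible quadruple. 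Exactly as in the second-order theorem I would quote \cite[Lemma 3.3]{chosine} to enlarge $\Omega$ to the disk $\{\omega\in\mathbb{C}:|\omega-1|<\sinh 1\}$, reducing the whole matter to the single modulus estimate $|r+\gamma_1 s+\gamma_2 t+\gamma_3 u-1|\geq\sinh 1$.

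Next I would insert the boundary data from \eqref{11 1}, \eqref{11 3} and \eqref{11 5}: $|r|=x(\theta)=e^{\cos\theta}$ and $|s|=m\,x(\theta)$, together with the admissibility lower bounds $\RE(t/s)\geq m\,y(\theta)+m-1$ (coming from $\RE(1+t/s)\geq m(1+y(\theta))$) and $\RE(u/s)\geq m^2 w(\theta)+3m(k-1)y(\theta)$, where $y(\theta)=\cos\theta$ and $w(\theta)=\cos 2\theta$. The ``either/or'' in the statement corresponds to the two ways of splitting the triangle inequality. In Case~I, I would write $|r+\gamma_1 s+\gamma_2 t+\gamma_3 u-1|\geq|\gamma_1 s+\gamma_2 t+\gamma_3 u|-|r-1|$, factor $s$ out of the first term, pass to real parts through $|\gamma_1+\gamma_2(t/s)+\gamma_3(u/s)|\geq\RE(\gamma_1+\gamma_2(t/s)+\gamma_3(u/s))$, substitute the three admissibility bounds, and estimate $|r-1|\leq x(\theta)+1$. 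Case~II is the mirror image, peeling off $|r-1|$ with the opposite sign exactly as in Theorem~\ref{11 sinetheorem}.

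The step I expect to be the main obstacle is the final minimisation over $\theta\in[0,2\pi]$ of the real expression produced above, because of the sign bookkeeping forced by the new $\gamma_3$ term. Following the second-order template, I would bound each summand by its own extreme value using $x(\theta)\in[1/e,e]$, $y(\theta)\in[-1,1]$ and $w(\theta)\in[-1,1]$; the $\gamma_3$ contribution now carries both a $m^2 w(\theta)$ piece (worst at $w=-1$) and a $3m(k-1)y(\theta)$ piece (worst at $y=-1$, using $k\geq m\geq 2$). The decisive contrast with the second-order case is that the $\cos 2\theta$ term prevents normalising $m$ away: while the leading factor of $m$ coming from $|s|$ is absorbed as in the second-order proof, the intrinsic weights $m^2$ and $m(k-1)$ inside the bound for $\RE(u/s)$ persist, which is precisely why the hypotheses must be phrased in terms of $m$ and $k$. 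Carrying out these estimates collapses Case~I to $\tfrac{1}{e}(\gamma_1-\gamma_2-m^2\gamma_3-3m(k-1)\gamma_3)-e-1\geq\sinh 1$ and Case~II to the corresponding inequality, and these are exactly the two displayed hypotheses after clearing denominators.

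Either branch forces $\xi(r,s,t,u;z)\notin\{\omega\in\mathbb{C}:|\omega-1|<\sinh 1\}\supseteq\Omega$, so $\xi\in\Psi[\Omega,e^z]$ and Lemma~\ref{8 firstlemmathirdorder} yields $p(z)\prec e^z$. At the outset I would also record that the normalisation $p\in\mathcal{H}[1,n]$ supplies the side condition $|zp'(z)e^{-\zeta}|\leq m$ demanded by Lemma~\ref{8 firstlemmathirdorder}, so that the lemma is genuinely applicable.
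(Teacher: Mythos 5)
Your proposal is correct and follows essentially the same route as the paper's proof: the same enlargement of $\Omega$ to the disk $\{\omega\in\mathbb{C}:|\omega-1|<\sinh 1\}$ via \cite[Lemma 3.3]{chosine}, the same two triangle-inequality splittings with the admissibility bounds $\RE(t/s)\geq my(\theta)+m-1$ and $\RE(u/s)\geq m^2w(\theta)+3m(k-1)y(\theta)$, the same extremal estimates $x(\theta)\in[1/e,e]$, $y(\theta),w(\theta)\in[-1,1]$, and the same concluding appeal to Lemma \ref{8 firstlemmathirdorder}. The only discrepancy is your closing claim that the normalisation $p\in\mathcal{H}[1,n]$ by itself supplies the side condition $|zp'(z)e^{-\zeta}|\leq m$ (it does not follow automatically), but the paper's own proof silently assumes that hypothesis as well, so this does not distinguish your argument from theirs.
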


\begin{proof}
Suppose $h(z)=1+\sin z$ for $z\in \mathbb{D}$ and $\Omega=\{\varpi\in \mathbb{C}:|\arcsin (\varpi-1)|<1\}$. Let us define $\xi:\mathbb{C}^4\times \mathbb{D}\rightarrow \mathbb{C}$ as $\xi(r,s,t,u;z)=r+\gamma_1 s+\gamma_2 t+\gamma_3 u$. For $\xi\in \Psi[\Omega, \Delta_e]$, it is required that $\xi(r,s,t,u;z)\notin \Omega$. Through \cite[Lemma 3.3]{chosine}, we note that the smallest disk containing $\Omega$ is $\{\omega\in \mathbb{C}: |\omega-1|<\sinh 1\}$. Since,
\begin{equation*}
        |\xi(r,s,t,u;z)-1|=|r+\gamma_1 s+\gamma_2 t+\gamma_3 u-1|.
\end{equation*}
Now, we consider two different cases, given as follows:\\
\noindent \underline{\textbf{Case I:}} If we express $|r+\gamma_1 s+\gamma_2 t+\gamma_3 u-1|$ as
\begin{align*}
       |r+\gamma_1 s+\gamma_2 t+\gamma_3 u-1|&\geq |\gamma_1 s+\gamma_2 t+\gamma_3 u|-|r-1|\\
       &\geq \gamma_1|s|\bigg|1+\frac{\gamma_2}{\gamma_1}\frac{t}{s}+\frac{\gamma_3}{\gamma_1}\frac{u}{s}\bigg|-(|r|+1)\\
       &\geq \gamma_1 |s|\RE\bigg(1+\frac{\gamma_2}{\gamma_1}\frac{t}{s}+\frac{\gamma_3}{\gamma_1}\frac{u}{s}\bigg)-|r|-1\\
       &\geq m\gamma_1 x(\theta)\bigg(1+\frac{\gamma_2}{\gamma_1}(y(\theta)m+m-1)+\frac{\gamma_3}{\gamma_1}(m^2w(\theta)+3m(k-1)y(\theta))\bigg)\\
       &\quad-x(\theta)-1.
\end{align*}
Here, $x(\theta)$, $y(\theta)$ and $w(\theta)$ are defined in \eqref{11 1}, \eqref{11 3} and \eqref{11 5}, respectively. Since $m\geq 1$, we obtain 
\begin{align*}
     |r+\gamma_1 s+\gamma_2 t+\gamma_3 u-1|&\geq \gamma_1 x(\theta)\bigg(1+\frac{\gamma_2}{\gamma_1}y(\theta)+\frac{\gamma_3}{\gamma_1}(m^2w(\theta)+3m(k-1)y(\theta))\bigg)-x(\theta)-1\\
     &\geq \frac{1}{e}(\gamma_1-\gamma_2-\gamma_3(m^2+3m(k-1))-e-1\\
     &\geq \sinh 1.
\end{align*}

\noindent \underline{\textbf{Case II:}} If we express $|r+\gamma_1 s+\gamma_2 t+\gamma_3 u-1|$ as
\begin{align*}
       |r+\gamma_1 s+\gamma_2 t+\gamma_3 u-1|&\geq |r-1| -|\gamma_1 s+\gamma_2 t+\gamma_3 u|\\
       &\geq 1-|r|-\gamma_1|s|\bigg|1+\frac{\gamma_2}{\gamma_1}\frac{t}{s}+\frac{\gamma_3}{\gamma_1}\frac{u}{s}\bigg|\\
       &\geq 1-|r|-\gamma_1 |s|\RE\bigg(1+\frac{\gamma_2}{\gamma_1}\frac{t}{s}+\frac{\gamma_3}{\gamma_1}\frac{u}{s}\bigg)\\
       &\geq 1-x(\theta)- m\gamma_1 x(\theta)\bigg(1+\frac{\gamma_2}{\gamma_1}(y(\theta)m+m-1)+\frac{\gamma_3}{\gamma_1}(m^2w(\theta)\\
       &\quad +3m(k-1)y(\theta))\bigg).
\end{align*}
Here, $x(\theta)$, $y(\theta)$ and $w(\theta)$ are defined in \eqref{11 1}, \eqref{11 3} and \eqref{11 5}, respectively. Since $m\geq 1$, we obtain 
\begin{align*}
     |r+\gamma_1 s+\gamma_2 t+\gamma_3 u-1|&\geq 1-x(\theta)-\gamma_1 x(\theta)\bigg(1+\frac{\gamma_2}{\gamma_1}y(\theta)+\frac{\gamma_3}{\gamma_1}(m^2w(\theta)+3m(k-1)y(\theta))\bigg)\\
     &\geq 1-e-e(\gamma_1+\gamma_2+\gamma_3(m^2+3m(k-1))\\
     &\geq \sinh 1.
\end{align*}
Clearly, $\xi(r,s,t,u;z)$ does not belong to the disk $\{\omega\in \mathbb{C}: |\omega-1|<\sinh 1\}$ indicates that $\xi(r,s,t,u;z)\notin \Omega$. Therefore, $\xi\in \Psi[\Omega,\Delta_e]$ and thus $p(z)\prec e^z$ by employing Lemma \ref{8 firstlemmathirdorder}.
\end{proof}

\begin{theorem}
Suppose $\gamma_1$, $\gamma_2$, $\gamma_3>0$ and either $\gamma_1-\gamma_2-m^2\gamma_3-3m(k-1)\gamma_3-e(e+1)\geq e^2$ or $1-e-e(\gamma_1+\gamma_2+m^2\gamma_3+3m(k-1)\gamma_3)\geq e$. If $p$ is an analytic function in $\mathbb{D}$ such that $p(0)=1$ and
\begin{equation*}
        p(z)+\gamma_1 zp'(z)+\gamma_2 z^2p''(z)+\gamma_3 z^3p'''(z)\prec 1+ze^z
\end{equation*}
then $p(z)\prec e^z$.
\end{theorem}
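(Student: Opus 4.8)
The plan is to mirror the proof of Theorem~\ref{11 thirdordersine}, changing only the target region and the radius of its enclosing disk. First I would set $h(z)=1+ze^z$, take $\Omega=h(\mathbb{D})$, and define the admissible operator $\xi:\mathbb{C}^4\times\mathbb{D}\to\mathbb{C}$ by $\xi(r,s,t,u;z)=r+\gamma_1 s+\gamma_2 t+\gamma_3 u$. The only geometric input needed is the size of the smallest disk centered at $1$ containing $\Omega$: by \cite[Lemma 3.3]{kumar-ganganiaCardioid-2021} this disk is $\{\omega\in\mathbb{C}:|\omega-1|<e\}$, exactly the estimate already used for the second-order version with the same $h$. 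Consequently it suffices to prove $|\xi(r,s,t,u;z)-1|\ge e$ whenever $(r,s,t,u)$ ranges over the admissibility data of $\Psi[\Omega,\Delta_e]$, since this forces $\xi(r,s,t,u;z)\notin\Omega$ and hence $\xi\in\Psi[\Omega,\Delta_e]$.

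For the lower bound I would split into the two cases dictated by the two hypotheses. In Case~I, I would write $|r+\gamma_1 s+\gamma_2 t+\gamma_3 u-1|\ge|\gamma_1 s+\gamma_2 t+\gamma_3 u|-|r-1|$, factor out $\gamma_1|s|$, and pass to real parts via $|1+\tfrac{\gamma_2}{\gamma_1}\tfrac{t}{s}+\tfrac{\gamma_3}{\gamma_1}\tfrac{u}{s}|\ge\RE(1+\tfrac{\gamma_2}{\gamma_1}\tfrac{t}{s}+\tfrac{\gamma_3}{\gamma_1}\tfrac{u}{s})$. Substituting the admissibility relations $s=m\zeta q'(\zeta)$, $\RE(1+t/s)\ge m(1+y(\theta))$ and $\RE(u/s)\ge m^2 w(\theta)+3m(k-1)y(\theta)$ from \eqref{11 4} and \eqref{11 5}, invoking $m\ge1$ to discard the surplus $m$-factors, and bounding $x(\theta)=e^{\cos\theta}$, $y(\theta)=\cos\theta$, $w(\theta)=\cos 2\theta$ by their extreme values, the chain collapses to $\tfrac1e(\gamma_1-\gamma_2-\gamma_3(m^2+3m(k-1)))-e-1$, which is $\ge e$ precisely under the first hypothesis $\gamma_1-\gamma_2-m^2\gamma_3-3m(k-1)\gamma_3-e(e+1)\ge e^2$. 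In Case~II I would instead start from $|r+\gamma_1 s+\gamma_2 t+\gamma_3 u-1|\ge|r-1|-|\gamma_1 s+\gamma_2 t+\gamma_3 u|$ and run the same substitutions to reach $1-e-e(\gamma_1+\gamma_2+\gamma_3(m^2+3m(k-1)))$, which is $\ge e$ exactly under the second hypothesis. Either way $\xi(r,s,t,u;z)\notin\{\omega:|\omega-1|<e\}$, so the conclusion $p(z)\prec e^z$ follows from Lemma~\ref{8 firstlemmathirdorder}.

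The delicate point is the admissibility estimate rather than the geometry of $\Omega$. I must feed the third-order condition $\RE(u/s)\ge m^2 w(\theta)+3m(k-1)y(\theta)$ in with the correct sign, and simultaneously select extremal $\theta$-values for $x,y,w$ so that each grouped term is minimized in the right direction while the bracketed factor $1+\tfrac{\gamma_2}{\gamma_1}y(\theta)+\cdots$ stays nonnegative (otherwise the step $|{\cdot}|\ge\RE({\cdot})$ would push the estimate the wrong way). This is precisely where the constants $m$ and $k$ of Lemma~\ref{lemmaformk} surface in the hypotheses: the coefficient $m^2+3m(k-1)$ on $\gamma_3$ is the residue of these substitutions, and checking that $m\ge1$ and $k\ge m$ permit dropping the spurious $m$-multipliers without reversing any inequality is the only step requiring genuine care. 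Everything else reduces to the routine arithmetic already performed for $h(z)=1+\sin z$ in Theorem~\ref{11 thirdordersine}.
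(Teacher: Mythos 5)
Your proposal is correct and follows essentially the same route as the paper: the same operator $\xi(r,s,t,u;z)=r+\gamma_1 s+\gamma_2 t+\gamma_3 u$, the same enclosing disk $\{\omega:|\omega-1|<e\}$ from \cite[Lemma 3.3]{kumar-ganganiaCardioid-2021}, the same two triangle-inequality cases reducing to $\tfrac1e(\gamma_1-\gamma_2-\gamma_3(m^2+3m(k-1)))-e-1\geq e$ and $1-e-e(\gamma_1+\gamma_2+\gamma_3(m^2+3m(k-1)))\geq e$, and the same conclusion via Lemma \ref{8 firstlemmathirdorder}. The paper's proof is just a compressed version of the argument you spell out, so no substantive difference exists.
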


\begin{proof}
Suppose $h(z)=1+ze^z$ for $z\in \mathbb{D}$ and $h(\mathbb{D})=\Omega$. We define $\xi:\mathbb{C}^4\times \mathbb{D}\rightarrow \mathbb{C}$ as $\xi(r,s,t,u;z)=r+\gamma_1 s+\gamma_2 t+\gamma_3 u$. For $\xi\in \Psi[\Omega, \Delta_e]$, we need to have that $\xi(r,s,t,u;z)\notin \Omega$. Through \cite[Lemma 3.3]{kumar-ganganiaCardioid-2021}, we note that the smallest disk containing $\Omega$ is $\{\omega\in\mathbb{C}: |\omega-1|<e\}$. Since
\begin{equation*}
        |\xi(r,s,t,u;z)-1|=|r+\gamma_1 s+\gamma_2 t+\gamma_3 u-1|.
\end{equation*}
Further, we consider the following cases:\\
\noindent \underline{\textbf{Case I:}} If we express $|r+\gamma_1 s+\gamma_2 t+\gamma_3 u-1|$ as
\begin{align*}
       |r+\gamma_1 s+\gamma_2 t+\gamma_3 u-1|&\geq x(\theta)\bigg(\gamma_1+\gamma_2 y(\theta)+\gamma_3(m^2w(\theta)+3m(k-1)y(\theta))\bigg)-x(\theta)-1\\
       &\geq \frac{1}{e}(\gamma_1-\gamma_2-\gamma_3(m^2+3m(k-1))-e-1\\
       &\geq e.
\end{align*}

\noindent \underline{\textbf{Case II:}} If we express $|r+\gamma_1 s+\gamma_2 t+\gamma_3 u-1|$ as
\begin{align*}
       |r+\gamma_1 s+\gamma_2 t+\gamma_3 u-1|&\geq 1-x(\theta)-x(\theta)\bigg(\gamma_1+\gamma_2 y(\theta)+\gamma_3(m^2w(\theta)+3m(k-1)y(\theta))\bigg)\\
       &\geq 1-e-e(\gamma_1+\gamma_2+\gamma_3(m^2+3m(k-1))\\
       &\geq e.
\end{align*}
Clearly, $\xi(r,s,t,u;z)$ does not belong to the disk $\{\omega\in \mathbb{C}: |\omega-1|<e\}$, is enough to conclude that $\xi(r,s,t,u;z)\notin \Omega$. Therefore, $\xi\in \Psi[\Omega,\Delta_e]$ and thus $p(z)\prec e^z$ by using Lemma \ref{8 firstlemmathirdorder}.
\end{proof}

\begin{theorem}
Suppose $\gamma_1$, $\gamma_2$, $\gamma_3>0$ and either $\gamma_1-\gamma_2-m^2\gamma_3-3m(k-1)\gamma_3-e(e+1)\geq \sqrt{2}e$ or $1-e-e(\gamma_1+\gamma_2+m^2\gamma_3+3m(k-1)\gamma_3)\geq \sqrt{2}$. If $p$ is an analytic function in $\mathbb{D}$ such that $p(0)=1$ and
\begin{equation*}
        p(z)+\gamma_1 zp'(z)+\gamma_2 z^2p''(z)+\gamma_3 z^3p'''(z)\prec z+\sqrt{1+z^2}
\end{equation*}
then $p(z)\prec e^z$.
\end{theorem}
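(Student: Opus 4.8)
The plan is to reproduce, for the crescent target $h(z)=z+\sqrt{1+z^2}$, the same admissibility-class argument that drives Theorem \ref{11 thirdordersine} and its companions, borrowing the geometric analysis of $\Omega$ already carried out in the corresponding second-order crescent theorem. First I would set $h(z)=z+\sqrt{1+z^2}$, take $\Omega=\{\varpi\in\mathbb{C}:|\varpi^2-1|<2|\varpi|\}$, and define $\xi:\mathbb{C}^4\times\mathbb{D}\to\mathbb{C}$ by $\xi(r,s,t,u;z)=r+\gamma_1 s+\gamma_2 t+\gamma_3 u$. The objective is to show $\xi\in\Psi[\Omega,\Delta_e]$, that is, $\xi(r,s,t,u;z)\notin\Omega$ on the admissible data, after which Lemma \ref{8 firstlemmathirdorder} yields $p(z)\prec e^z$.

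The geometric reduction is exactly the one recorded in the second-order crescent statement: $\Omega$ is the lune bounded by $C_1:|z-1|=\sqrt{2}$ and $C_2:|z+1|=\sqrt{2}$, and it is contained in the open disk bounded by $C_1$. Consequently it suffices to drive $\xi$ outside $C_1$, i.e.\ to prove
\begin{equation*}
|\xi(r,s,t,u;z)-1|=|r+\gamma_1 s+\gamma_2 t+\gamma_3 u-1|\geq\sqrt{2};
\end{equation*}
any point at distance $\geq\sqrt2$ from $1$ lies outside $\Omega$ automatically, so $\xi(r,s,t,u;z)\notin\Omega$ follows.

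For the estimate I would insert the third-order admissibility data $r=e^{e^{i\theta}}$, $s=m e^{i\theta}e^{e^{i\theta}}$, together with $\RE(1+t/s)\geq m(1+y(\theta))$ and $\RE(u/s)\geq m^2 w(\theta)+3m(k-1)y(\theta)$, where $x,y,w$ are as in \eqref{11 1}, \eqref{11 3}, \eqref{11 5}. Splitting by the triangle inequality in the two directions as in Theorem \ref{11 thirdordersine} (Case I bounding $|\gamma_1 s+\gamma_2 t+\gamma_3 u|-|r-1|$ from below, Case II bounding $|r-1|-|\gamma_1 s+\gamma_2 t+\gamma_3 u|$), then passing the modulus of the bracketed sum to its real part, using $m\geq1$ to drop the surplus factor of $m$, and finally inserting the extremal values of $x,y,w$, I expect to reach
\begin{align*}
|r+\gamma_1 s+\gamma_2 t+\gamma_3 u-1|&\geq\frac{1}{e}\bigl(\gamma_1-\gamma_2-\gamma_3(m^2+3m(k-1))\bigr)-e-1,\\
|r+\gamma_1 s+\gamma_2 t+\gamma_3 u-1|&\geq 1-e-e\bigl(\gamma_1+\gamma_2+\gamma_3(m^2+3m(k-1))\bigr).
\end{align*}
Each of the two hypotheses on $\gamma_1,\gamma_2,\gamma_3$ is precisely the requirement that the respective right-hand side be at least $\sqrt{2}$, so whichever case applies delivers $|\xi-1|\geq\sqrt2$, completing the verification.

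I expect the genuine obstacle to be the passage from the $\theta$-dependent expression to the clean bounds above. Because $y(\theta)=\cos\theta$, $w(\theta)=\cos2\theta$ and the weight $x(\theta)=e^{\cos\theta}$ attain their extremes at different angles—and in particular $x$ and $w$ are coupled through $c=\cos\theta$, so that $w=1$ exactly where $x=1/e$—a naive termwise substitution of individual extremes is not, by itself, a rigorous lower bound for the $\gamma_3$ contribution. The careful route is to write everything as a single-variable function of $c\in[-1,1]$ and minimize directly, checking the sign of each coefficient so that replacing $x,y,w$ by the chosen values genuinely decreases the expression. Once this minimization is justified and the two displayed inequalities are secured, matching them against the stated conditions and invoking Lemma \ref{8 firstlemmathirdorder} finishes the proof.
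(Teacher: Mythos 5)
Your proposal follows the paper's proof essentially verbatim: same choice of $\Omega=\{\varpi\in\mathbb{C}:|\varpi^2-1|<2|\varpi|\}$ and $\xi(r,s,t,u;z)=r+\gamma_1 s+\gamma_2 t+\gamma_3 u$, same reduction of the crescent to the disk bounded by $C_1:|z-1|=\sqrt{2}$, same two triangle-inequality cases leading to the bounds $\tfrac{1}{e}\bigl(\gamma_1-\gamma_2-\gamma_3(m^2+3m(k-1))\bigr)-e-1\geq\sqrt{2}$ and $1-e-e\bigl(\gamma_1+\gamma_2+\gamma_3(m^2+3m(k-1))\bigr)\geq\sqrt{2}$, and the same appeal to Lemma \ref{8 firstlemmathirdorder}. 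The coupling concern you raise at the end is real but is resolved exactly by the sign check you describe: since $\gamma_2,\gamma_3>0$ and $3m(k-1)\geq 0$, the pointwise bounds $y(\theta),w(\theta)\geq -1$ give a lower bound for the bracket whose minimum is forced positive by the hypothesis, so multiplying by $x(\theta)\geq 1/e$ (and bounding the separate $-x(\theta)$ term by $-e$) is legitimate — which is precisely the termwise substitution the paper performs.
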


\begin{proof}
Suppose $h(z)=z+\sqrt{1+z^2}$ for $z\in \mathbb{D}$ and $\Omega=\{\varpi\in \mathbb{C}:|\varpi^2-1|<2|\varpi|\}$. Define $\xi:\mathbb{C}^4\times \mathbb{D}\rightarrow \mathbb{C}$ as $\xi(r,s,t,u;z)=r+\gamma_1 s+\gamma_2 t+\gamma_3 u$. For $\xi\in \Psi[\Omega, \Delta_e]$, we must have $\xi(r,s,t,u;z)\notin \Omega$. From the graph of $z+\sqrt{1+z^2}$ (see Fig. \ref{crescent}), we note that $\Omega$ is constructed by two circles
\begin{equation*}
        C_1:|z-1|=\sqrt{2}\quad\text{and}\quad C_2:|z+1|=\sqrt{2}.
\end{equation*}
It is obvious that $\Omega$ contains the disk enclosed by $C_1$ and excludes the portion of the disk enclosed by $C_2\cap C_1$. Since
\begin{equation*}
        |\xi(r,s,t,u;z)-1|=|r+\gamma_1 s+\gamma_2 t+\gamma_3 u-1|.
\end{equation*}
Further, we have\\
\noindent \underline{\textbf{Case I:}} If we express $|r+\gamma_1 s+\gamma_2 t+\gamma_3 u-1|$ as
\begin{align*}
        |r+\gamma_1 s+\gamma_2 t+\gamma_3 u-1|&\geq x(\theta)\bigg(\gamma_1+\gamma_2 y(\theta)+\gamma_3(m^2w(\theta)+3m(k-1)y(\theta))\bigg)-x(\theta)-1\\
       &\geq \frac{1}{e}(\gamma_1-\gamma_2-\gamma_3(m^2+3m(k-1))-e-1\\
       &\geq \sqrt{2}.
\end{align*} 
\noindent \underline{\textbf{Case II:}} If we express $|r+\gamma_1 s+\gamma_2 t+\gamma_3 u-1|$ as
\begin{align*}
       |r+\gamma_1 s+\gamma_2 t+\gamma_3 u-1|&\geq 1-x(\theta)-x(\theta)\bigg(\gamma_1+\gamma_2 y(\theta)+\gamma_3(m^2w(\theta)+3m(k-1)y(\theta))\bigg)\\
       &\geq 1-e-e(\gamma_1+\gamma_2+\gamma_3(m^2+3m(k-1))\\
     &\geq \sqrt{2}.
       \end{align*}
Thus, we can say that $\xi(r,s,t,u;z)$ does not belong to the circle $C_1$ which suffices us to deduce that $\xi(r,s,t,u;z)\notin \Omega$. Therefore, $\xi\in \Psi[\Omega,\Delta_e]$ and thus $p(z)\prec e^z$ by using Lemma \ref{8 firstlemmathirdorder}.
\end{proof}

We conclude this section by considering $h(z)=1+\sinh^{-1}z$. 
\begin{theorem}
Suppose $\gamma_1$, $\gamma_2$, $\gamma_3>0$ and either $2(\gamma_1-\gamma_2 -m^2\gamma_3-3m(k-1)\gamma_3-e(e+1))\geq \pi e$ or $2(1-e-e(\gamma_1+\gamma_2 +m^2\gamma_3+3m(k-1)\gamma_3))\geq \pi$. If $p$ is an analytic function in $\mathbb{D}$ such that $p(0)=1$ and
\begin{equation*}
        p(z)+\gamma_1 zp'(z)+\gamma_2 z^2p''(z)+\gamma_3 z^3p'''(z)\prec 1+\sinh^{-1} z
\end{equation*}
then $p(z)\prec e^z$.
\end{theorem}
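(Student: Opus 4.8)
The plan is to reproduce the admissibility-operator argument of Lemma \ref{8 firstlemmathirdorder} exactly as in the preceding third-order theorems, the only new ingredient being the geometry of the image of $1+\sinh^{-1}z$. First I would put $h(z)=1+\sinh^{-1}z$, $\Omega=h(\mathbb{D})=\{\varpi\in\mathbb{C}:|\sinh(\varpi-1)|<1\}$, and define the operator $\xi:\mathbb{C}^4\times\mathbb{D}\to\mathbb{C}$ by $\xi(r,s,t,u;z)=r+\gamma_1 s+\gamma_2 t+\gamma_3 u$. Since $p$ is analytic with $p(0)=1$, to invoke Lemma \ref{8 firstlemmathirdorder} it suffices to verify that $\xi\in\Psi[\Omega,e^z]$, i.e.\ that $\xi(r,s,t,u;z)\notin\Omega$ whenever $(r,s,t,u)$ range over the admissible values recorded in Section~2.

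The key reduction is to replace $\Omega$ by the smallest disk containing it. By \cite[Remark 2.7]{kush} that disk is $\{\omega\in\mathbb{C}:|\omega-1|<\pi/2\}$, so it is enough to prove the single inequality $|\xi(r,s,t,u;z)-1|=|r+\gamma_1 s+\gamma_2 t+\gamma_3 u-1|\geq \pi/2$, which places $\xi$ outside that open disk and hence outside $\Omega$. For the admissible values I would substitute $r=e^{e^{i\theta}}$, $s=m e^{i\theta}e^{e^{i\theta}}$ (so that $|r|=x(\theta)$ and $|s|=m\,x(\theta)$ as in \eqref{11 1}), together with the real-part bounds $\RE(1+t/s)\geq m(1+y(\theta))$ and $\RE(u/s)\geq m^2 w(\theta)+3m(k-1)y(\theta)$ coming from \eqref{11 3} and \eqref{11 5}, where $k\geq m\geq n\geq 2$.

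Exactly as in Theorem \ref{11 thirdordersine}, the estimate then splits into two cases according to which form of the triangle inequality is used. In Case~I one writes $|r+\gamma_1 s+\gamma_2 t+\gamma_3 u-1|\geq |\gamma_1 s+\gamma_2 t+\gamma_3 u|-|r-1|$, bounds the modulus $\gamma_1|s|\,|1+(\gamma_2/\gamma_1)(t/s)+(\gamma_3/\gamma_1)(u/s)|$ below by its real part, and after using $m\geq 1$ and the extremal values $x(\theta)=1/e$, $y(\theta)=w(\theta)=-1$ arrives at $\frac{1}{e}(\gamma_1-\gamma_2-\gamma_3(m^2+3m(k-1)))-e-1$, which is $\geq \pi/2$ precisely under the first hypothesis $2(\gamma_1-\gamma_2-m^2\gamma_3-3m(k-1)\gamma_3-e(e+1))\geq \pi e$. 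In Case~II one instead uses $|r+\gamma_1 s+\gamma_2 t+\gamma_3 u-1|\geq |r-1|-|\gamma_1 s+\gamma_2 t+\gamma_3 u|$ and, choosing the opposite extremes $x(\theta)=e$, $y(\theta)=w(\theta)=1$, reaches $1-e-e(\gamma_1+\gamma_2+\gamma_3(m^2+3m(k-1)))\geq \pi/2$, which is the second hypothesis. Either way $|\xi-1|\geq \pi/2$, so $\xi\in\Psi[\Omega,e^z]$ and Lemma \ref{8 firstlemmathirdorder} delivers $p(z)\prec e^z$.

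The main difficulty is purely one of careful sign-tracking rather than a genuine obstacle: because $y(\theta)$ and $w(\theta)$ may be negative, one must correctly select $y=w=-1$ in Case~I versus $y=w=1$ in Case~II, and, crucially, use $x(\theta)=1/e$ on the multiplicative factor that multiplies the bracket while using $x(\theta)=e$ on the isolated $-x(\theta)$ term, bounding the two pieces separately instead of at a common $\theta$. One must also check that the bracket $\gamma_1-\gamma_2-\gamma_3(m^2+3m(k-1))$ remains nonnegative under the hypothesis (which it does, since the final quantity is forced to exceed $\pi/2>0$), so that the choice $x=1/e$ genuinely minimizes, and invoke $m\geq 1$ through the identity $y(\theta)m+m-1-y(\theta)=(m-1)(y(\theta)+1)\geq 0$ to discard the surplus factors of $m$ produced by $\RE(1+t/s)\geq m(1+y(\theta))$.
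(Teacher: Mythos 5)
Your proposal is correct and follows essentially the same route as the paper's own proof: the same operator $\xi(r,s,t,u;z)=r+\gamma_1 s+\gamma_2 t+\gamma_3 u$, the same reduction of $\Omega$ to the smallest containing disk $\{\omega\in\mathbb{C}:|\omega-1|<\pi/2\}$ via \cite[Remark 2.7]{kush}, the same two-case triangle-inequality estimates with the extremal values of $x(\theta)$, $y(\theta)$, $w(\theta)$, and the same conclusion through Lemma \ref{8 firstlemmathirdorder}. In fact your write-up is slightly more careful than the paper's, since you make explicit the nonnegativity of the bracket and the identity $(m-1)(y(\theta)+1)\geq 0$ that justify the term-by-term bounding which the paper compresses into ``Since $m\geq 1$.''
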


\begin{proof}
Suppose $h(z)=1+\sinh^{-1} z$ for $z\in \mathbb{D}$ and $\Omega=\{\varpi\in\mathbb{C}:|\sinh (\varpi-1)|<1\}$. Define $\xi:\mathbb{C}^4\times \mathbb{D}\rightarrow \mathbb{C}$ as $\xi(r,s,t,u;z)=r+\gamma_1 s+\gamma_2 t+\gamma_3 u$. For $\xi\in \Psi[\Omega, \Delta_e]$, it is required that $\xi(r,s,t,u;z)\notin \Omega$. Through \cite[Remark 2.7]{kush}, we note that the smallest disk containing $\Omega$ is $\{\omega\in\mathbb{C}: |\omega-1|<\pi/2\}$. Since
\begin{equation*}
        |\xi(r,s,t,u;z)-1|=|r+\gamma_1 s+\gamma_2 t+\gamma_3 u-1|.
\end{equation*}
Now, we deal with two cases, given as follows:\\
\noindent \underline{\textbf{Case I:}} If we express $|r+\gamma_1 s+\gamma_2 t+\gamma_3 u-1|$ as
\begin{align*}
      |r+\gamma_1 s+\gamma_2 t+\gamma_3 u-1|&\geq x(\theta)\bigg(\gamma_1+\gamma_2 y(\theta)+\gamma_3(m^2w(\theta)+3m(k-1)y(\theta))\bigg)-x(\theta)-1\\
       &\geq \frac{1}{e}(\gamma_1-\gamma_2-\gamma_3(m^2+3m(k-1))-e-1\\
       &\geq \frac{\pi}{2}.
\end{align*}
\noindent \underline{\textbf{Case II:}} If we express $|r+\gamma_1 s+\gamma_2 t+\gamma_3 u-1|$ as
\begin{align*}
       |r+\gamma_1 s+\gamma_2 t+\gamma_3 u-1|&\geq 1-x(\theta)-x(\theta)\bigg(\gamma_1+\gamma_2 y(\theta)+\gamma_3(m^2w(\theta)+3m(k-1)y(\theta))\bigg)\\
       &\geq 1-e-e(\gamma_1+\gamma_2+\gamma_3(m^2+3m(k-1))\\
       &\geq  \frac{\pi}{2}.
       \end{align*}
Clearly, $\xi(r,s,t,u;z)\notin\{\omega\in \mathbb{C}: |\omega-1|<\pi/2\}$ which suffices to prove that $\xi(r,s,t,u;z)\notin \Omega$. Therefore, $\xi\in \Psi[\Omega,\Delta_e]$ and thus $p(z)\prec e^z$ by Lemma \ref{8 firstlemmathirdorder}.
\end{proof}

				%
\subsection*{Acknowledgment}
Neha Verma is thankful to the Department of Applied Mathematics, Delhi Technological University, New Delhi-110042 for providing Research Fellowship.

\end{document}